\theoremstyle{plain}
\newtheorem{theorem}{Theorem}[section]
\newtheorem{lemma}[theorem]{Lemma}
\newtheorem{corollary}[theorem]{Corollary}
\newtheorem{definition}[theorem]{Definition}
\theoremstyle{remark}
\newtheorem{remark}[theorem]{Remark}
\newcommand{\na}{\nabla}
\newcommand{\Om}{\Omega}
\newcommand{\C}{\mathbb{C}}
\newcommand{\N}{\mathbb{N}}
\newcommand{\R}{\mathbb{R}}
\newcommand{\pa}{\partial}
\newcommand{\rr}{^R}
\newcommand{\ii}{^I}
\newcommand{\norm}[1]{\lVert#1\rVert}
\DeclareMathOperator{\dv}{div}
\DeclareMathOperator{\re}{Re}
\DeclareMathOperator{\im}{Im}
\DeclareFontFamily{U}{mathx}{}
\DeclareFontShape{U}{mathx}{m}{n}{ <-> mathx10 }{}
\DeclareSymbolFont{mathx}{U}{mathx}{m}{n}
\DeclareMathAccent{\widecheck}{0}{mathx}{"71}
\def\Xint#1{\mathchoice
    {\XXint\displaystyle\textstyle{#1}}%
    {\XXint\textstyle\scriptstyle{#1}}%
    {\XXint\scriptstyle\scriptscriptstyle{#1}}%
    {\XXint\scriptscriptstyle\scriptscriptstyle{#1}}%
    \!\int}
\def\XXint#1#2#3{\setbox0=\hbox{$#1{#2#3}{\int}$}
    \vcenter{\hbox{$#2#3$}}\kern-0.5\wd0}
\def\dashint{\Xint{\raise4pt\hbox to7pt{\hrulefill}}}
\def\XXiint#1#2#3{\setbox0=\hbox{$#1{#2#3}{\iint}$}
    \vcenter{\hbox{$#2#3$}}\kern-0.5\wd0}
\def\Xint#1{\mathchoice
	{\XXint\displaystyle\textstyle{#1}}%
	{\XXint\textstyle\scriptstyle{#1}}%
	{\XXint\scriptstyle\scriptscriptstyle{#1}}%
	{\XXint\scriptstyle\scriptscriptstyle{#1}}%
	\!\int}
\def\XXint#1#2#3{{\setbox0=\hbox{$#1{#2#3}{\int}$}
		\vcenter{\hbox{$#2#3$}}\kern-.5\wd0}}
\def\YYint#1#2#3{{\setbox0=\hbox{$#1{#2#3}{\iint}$}
		\vcenter{\hbox{$#2#3$}}\kern-.51\wd0}}
\def\Xint#1{\mathchoice
	{\XXint\displaystyle\textstyle{#1}}%
	{\XXint\textstyle\scriptstyle{#1}}%
	{\XXint\scriptstyle\scriptscriptstyle{#1}}%
	{\XXint\scriptscriptstyle\scriptscriptstyle{#1}}%
	\!\int}
\def\XXint#1#2#3{{\setbox0=\hbox{$#1{#2#3}{\int}$ }
		\vcenter{\hbox{$#2#3$ }}\kern-.6\wd0}}
\def\dashint{\Xint-}
\newcommand{\fint}{\dashint}
\title[$p$-Laplace with complex coefficients]{Existence, uniqueness and regularity for elliptic $p$-Laplace systems with complex coefficients}
\subjclass[2020]{35D30, 35J47, 35J57, 35J60, 35J70, 35J75, 35B65}
\keywords{Complex valued p-Laplace equation, elliptic systems,  existence, uniqueness, Schauder estimate, parametric elliptic equations.}
\author[W. Kim]{Wontae Kim}
\address{Wontae Kim,
Department of Mathematics, Uppsala University,
P.~O.~Box 480, 751 06, Uppsala, Sweden}
\email{wontae.kim@math.uu.se}
\author[M. Vestberg]{Matias Vestberg}
\address{Matias Vestberg,
Department of Mathematics, Uppsala University,
P.~O.~Box 480, 751 06, Uppsala, Sweden}
\email{matias.vestberg@math.uu.se}
\patchcmd{\@setaddresses}{\indent}{\noindent}{}{}
\patchcmd{\@setaddresses}{\indent}{\noindent}{}{}
\patchcmd{\@setaddresses}{\indent}{\noindent}{}{}
\patchcmd{\@setaddresses}{\indent}{\noindent}{}{}
\begin{document}

\maketitle
\begin{abstract}
	This paper concerns elliptic systems of $p$-Laplace type with complex valued coefficient and source term. We extend the real valued theory of the elliptic $p$-Laplace equation to the complex valued case.
    We establish the existence and uniqueness of solutions to the Dirichlet problem and prove the Schauder estimate in the case of H\"older continuous coefficients and source terms. We also consider families of coefficient functions parametrized by a complex variable and prove a differentiability result for the map taking the complex parameter to the corresponding solution.
\end{abstract}

\section{Introduction}
In this paper, we study elliptic $p$-Laplace systems in the form
\begin{align}\label{eq:intro}
-\dv (a (\varepsilon^2+|\na u|^2)^\frac{p-2}{2}\na u)=-\dv F\quad\text{in}\quad\Omega,
\end{align}
with coefficient function $a: \Omega \to \C$, solution $u:\Omega \to \C^N$, and source term $F:\Omega \to \C^{N\times n}$ for some $N\geq 1$. Here $p\in(1,\infty)$, $\varepsilon\in [0,1]$ and $\Omega$ is an open bounded subset of $\mathbb{R}^n$ for $n\ge2$. We remark that all results except the differentiability property proved in the last section hold also in the limit case $\varepsilon = 0$. 
As we are working with complex-valued solutions, the inner product used in the weak formulation in the real valued context is replaced by the complex inner product $\langle \cdot,\cdot \rangle$, see Definition~\ref{def_complex_inner} for more details.

A natural question that arises is under what conditions a weak solution exists. For the source term, we can use the same integrability assumption as in the real valued case, namely we assume that $F$ belongs to $L^{p'}(\Om; \C^{N\times n})$. Finding a reasonable assumption on the coefficient function, however, requires some more care. We decompose the coefficient $a$ into its real and imaginary parts as follows:
\[
a(x)=a^R(x)+ia^I(x),
\]
where $a^R(x)$ and $a^I(x)$ are real valued measurable functions defined on $\Omega$. To extend the ellipticity condition naturally, we assume that there exist constants $0<\nu<L<\infty$ such that
\[
\nu\le a^R(x)-|a^I(x)|,\qquad a^R(x)+|a^I(x)|<L \quad\text{for a.e.}\quad x\in \Om.
\]
Note that the imaginary part $a^I$ can take negative values.
The operator
\[ 
 a(x)(\varepsilon^2+|\xi|^2)^\frac{p-2}{2}\xi, \quad \xi\in \C^{N\times n},
\]
corresponding to our system of equations will then satisfy the following structure bound:
\[
\big|a(x)(\varepsilon^2+|\xi|^2)^\frac{p-2}{2} \langle\xi,\xi \rangle\big| = |a(x)|(\varepsilon^2+|\xi|^2)^\frac{p-2}{2} |\xi|^2 \le 2^pL(\varepsilon^p+|\xi|^p).
\]
Meanwhile, the coercivity appears in the real part of the operator:
\[
\re \Big( a(x)(\varepsilon^2+|\xi|^2)^\frac{p-2}{2} \langle\xi,\xi \rangle\Big) = a_R(x)(\varepsilon^2+|\xi|^2)^\frac{p-2}{2}|\xi|^2\ge \nu( \varepsilon^2 +|\xi|^2)^\frac{p-2}{2}|\xi|^2.
\]
The monotonicity of the operator turns out to depend strongly on the relation between the real and imaginary parts of $a$. With the additional condition
\[
c_1a^R(x)-c_2|a^I(x)|>\nu
\]
where $c_1,c_2$ appear in Lemma~\ref{str1} and Lemma~\ref{str2}, the real part of the quantity
\[
\langle a(x)(\varepsilon^2+|\xi_1|^2)^\frac{p-2}{2}\xi_1-a(x)(\varepsilon^2+|\xi_2|^2)^\frac{p-2}{2}\xi_2  ,\xi_1-\xi_2\rangle,
\]
is bounded from below by $|\xi_1-\xi_2|^p$ multiplied with a constant, see the proof in Theorem~\ref{uniqueness} and Theorem~\ref{thm_ex} for details. Under these conditions, the existence of a weak solution to the Dirichlet boundary value problem can be established by extending the real valued monotone operator theory to the complex valued setting, see Theorem~\ref{thm_ex}. Moreover, the uniqueness result in Theorem~\ref{uniqueness} follows directly from an energy estimate. To the best of our knowledge, the existence theory for the complex valued elliptic $p$-Laplace equation is new.

Next, we investigate the regularity properties of weak solutions. First, consider a weak solution $v\in W^{1,p}(\Omega; \C^{N})$ to the model case
\begin{align}\label{eq:model}
-\dv\big( (\varepsilon^2+|\na v|^2)^\frac{p-2}{2}\na v \big)= 0,
\end{align}
in $\Omega$. Decomposing $v$ into real and imaginary parts as
\[
v=v^R+iv^I,
\]
where $v^R,v^I\in W^{1,p}(\Omega;\mathbb{R}^N)$, we define
\[
w := (v^R, v^I)\in W^{1,p}(\Omega;\mathbb{R}^{2N}).
\]
Then, as we show in Section \ref{sec:holdcont}, $w$ is a real valued weak solution to
\begin{align*}\label{eq:real_val}
-\dv \big((\varepsilon^2+|\na w|_{\mathbb{R}^{2N\times n}}^2)^\frac{p-2}{2}\na w\big) = 0,
\end{align*}
in $\Omega$. 
Hence, by \cite{uhlenbeck1977regularity}, $\na w$ is locally H\"older continuous, and thus so is $\na v$. By comparing solutions of \eqref{eq:intro} to solutions of the model case \eqref{eq:model} with coinciding boundary values on suitably scaled balls, we are able to obtain the classical Schauder estimate in the case when $a$ and $F$ are H\"older continuous.

The existence and uniqueness results for the linear equation ($p=2$) with complex coefficient were established in \cite{cohen2015approximation,cohen2011analytic}. The existence result therein is based on the Lax-Milgram theorem, while our result extends them to the range $p\in (1,\infty)$, and is based on the monotone operator theory, see \cite{lions1969quelques, MR1422252}. For the Schauder estimate in the real valued case, we refer to \cite{MR1230384,coscia1999holder,misawa2002local,Ya1989,acerbi2002regularity,kuusi2012new,de2023nonuniformly,kuusi2013gradient,bögelein2023holdercontinuitygradientsolutions,bogelein2012holder,MR1962933}.

Apart from the above mentioned topics, we have also investigated how the solution $u$ depends on the choice of the coefficient function $a$. More specifically, we consider the setting where $a$ is parametrized by a complex variable, i.e. $ a: \C \times \Omega \to \C$. The linear case $p=2$ with affine dependence on $z$ was investigated in \cite{cohen2011analytic} where the authors show that the map taking $z$ to the unique solution $u(z)$ to the Dirichlet problem with coefficient function $a(z,\cdot)$ is complex differentiable as a map from $\C$ into $H^1_0(\Omega;\C)$. 

In the case $p\neq 2$ it turns out that such a strong differentiability property is unlikely to hold except perhaps in some rare special cases. Nevertheless, for $\varepsilon > 0$ and rectangular domains, we are able to show that $z \mapsto u(z)$ is differentiable as a map into $H^1_0(\Omega;\C)$ along line segments in the complex plane, under the assumptions that the coefficients $a(z,\cdot)$ form a uniformly bounded set in a H\"older space, $a^I$ is sufficiently small compared to $a^R$, and $z\mapsto a(z,\cdot)$ is complex differentiable as a map into $L^\infty(\Omega;\C)$. For the precise result, we refer to Theorem \ref{diff_thm}. Note that this is somewhat stronger than simply having directional derivatives. 

To prove the differentiability we first establish the boundedness of the difference quotients $[u(z+h)-u(z)]/h$ in a Sobolev norm. This allows us to utilize weak compactness to extract a convergent subsequence for any sequence $h_j\to 0$. If $h_j$ is confined to a line segment it turns out that the limit satisfies a condition reminiscent of a weak formulation of a PDE. This condition uniquely determines the weak limit and a further calculation shows that the limit function is also the derivative of $z\mapsto u(z)$ along the specific line segment.

The original motivation in \cite{cohen2011analytic} to study the question of complex differentiability in the linear case was to prove that one can jointly approximate the solutions $u(z)$ by a certain series expansion, even in the case where $a$ and hence $u$ depends on countably many complex parameters. This result has later been applied to obtain error estimates for DeepONets, see \cite{LanthMishKarn}.

\vspace{2mm}
\noindent{\bf Acknowledgments.} This work was supported by the Wallenberg AI, Autonomous Systems and Software Program (WASP) funded by the Knut and Alice Wallenberg Foundation.

\section{Preliminary}

Let $i \in \C$ be the standard imaginary unit and let $n\ge1$ be a natural number. 

\begin{definition}
      For any positive integer $N\ge1$, we denote $F\in \mathbb{R}^{N\times n}$ if $F=(F_1,...,F_N)$ for some $F_j\in \mathbb{R}^n$, $1\le j\le N$. For any $c\in \mathbb{R}$, we denote $cF=(cF_1,...,cF_2)$.
     For $F,G\in \mathbb{R}^{N\times n}$, we generalize the inner product in $\mathbb{R}^{N\times n}$ by
     \[
     F\cdot G= \sum_{1\le j\le N}F_j\cdot G_j,
     \]
     where the notion $\cdot$ on the right hand side is the usual inner product in Euclidean space. The norm  in $\mathbb{R}^{N\times n}$ is defined as
     \[
     |F|_{\mathbb{R}^{N\times n}}=\Biggl(\sum_{1\le j\le N}|F_j|^2\Biggr)^\frac{1}{2},
     \]
     where $|F_j|$ is the usual norm in Euclidean space.
\end{definition}

\begin{definition}\label{def_complex_inner}
    We denote $F\in \C^{N\times n}$ if $F=F\rr+iF\ii$ holds for some $F\rr,F\ii\in \mathbb{R}^{N\times n}$. For any $c\in \mathbb{R}$, we denote $cF=cF\rr+icF\ii$.
    We naturally extend the complex inner product to elements $F,G\in \C^{N\times n}$ as
    \[
    \langle F,G \rangle 
    =\sum_{1\le j\le N} (F_j\rr\cdot G_j\rr+F_j\ii\cdot G_j\ii+i(-F_j\rr\cdot G_j\ii+ F_j\ii\cdot G_j\rr )  ),
    \]
    where the notation $\cdot$ on the right hand side is the inner product in Euclidean space.
    The norm  of $F$, denoted by $|F|$, is defined as
     \[
     |F|_{\C^{N\times n}}=(|F\rr|_{\mathbb{R}^{N\times n}}^2+|F\ii|_{\mathbb{R}^{N\times n}}^2)^\frac{1}{2}.
     \]
\end{definition}

\begin{definition}
    For $F\in \C^{N\times n}$, we denote
    \[
    \widehat{F}=(F_1\rr,...,F_N\rr, F_1\ii,...,F_N\ii),\quad \widecheck{F}=( -F_1\ii,...,-F_N\ii,F_1\rr,...,F_N\rr).
    \]
    We remark that $\widehat{F},\widecheck{F}\in \mathbb{R}^{2N\times n}$ and 
    \[
    \langle F,G \rangle = \widehat{F}\cdot \widehat{G}+i \widehat{F}\cdot \widecheck{G},
    \]
    where $\cdot$ on the right hand side is the inner product in $\mathbb{R}^{2N\times n}$. 
\end{definition}
Note that
    \begin{align}\label{equiv_norms}
    |\widehat{F}|_{\mathbb{R}^{2N\times n}} = |\widecheck{F}|_{\mathbb{R}^{2N\times n}}=|F|_{\C^{N\times n}}.
    \end{align}
In expressions such as those above, we may omit the subindexed vector space when it is clear from the context.

\begin{definition}
    For $F\in \C^{1\times 1}=\C$ with $F=F\rr+iF\ii$ for $F\rr,F\ii\in \mathbb{R}$, we define the projection to the real and imaginary axis by
    \[
    \re (F)=F\rr,\qquad \im(F)=F\ii.
    \]
\end{definition}

\begin{definition}
    Let $\Om$ be an open subset in $\mathbb{R}^n$ with $n\ge2$. For $p\in[1,\infty)$, a measurable map $F:\Om\mapsto \mathbb{R}^{N\times n}$ belongs to $L^p(\Om;\mathbb{R}^{N\times n})$ if 
    \[
    \int_{\Om}|F|^p\,dx<\infty.
    \]
\end{definition}

\begin{definition}
     Let $\Om$ be an open subset in $\mathbb{R}^n$ with $n\ge2$.
     For $p\in[1,\infty)$, $u\in W_0^{1,p}(\Om;\mathbb{C}^N)$ if
    \[
    \int_\Om |u|^p+|\na u|^p\,dx<\infty,
    \]
    where the first norm $|u|$ is the norm in $\mathbb{C}^N$ while the second norm $|\na u|$ is the norm in $\C^{N\times n}$. Here, we have set $u=u\rr+iu\ii$ with $u\rr,u\ii\in W_0^{1,p}(\Om;\mathbb{R}^N)$ and $\na u= \na u\rr+i\na u\ii$ with $\na u\rr=(\na u\rr_1,...,\na u\rr_N)$, $\na u\ii=(\na u\ii_1,...,\na u\ii_N)$, $\na u\rr_j=(\pa_1 u\rr_j,..., \pa_nu\rr_j)$ and $\na u\ii_j=(\pa_1 u\ii_j,..., \pa_nu\ii_j)$.
\end{definition}

\begin{definition}
    For $u\in W_0^{1,p}(\Om;\C^N)$, we will denote
    \begin{align*}
            \widehat{u}&=(u\rr_1,...,u\rr_N,u\ii_1,...,u\ii_N), &\na\widehat{u}&= (\na u\rr_1,...,\na u\rr_N,\na u\ii_1,...,\na u\ii_N),\\
             \widecheck{u}&=(-u\ii_1,...,-u\ii_N,u\rr_1,...,u\rr_N),   & \na\widecheck{u}&= (-\na u\ii_1,...,-\na u\ii_N,\na u\rr_1,...,\na u\rr_N).
    \end{align*}
\end{definition}

\section{Complex valued $p$-Laplace system}
 Let $n\ge2$ and let $\Om$ be an open bounded subset in $\mathbb{R}^n$.
Consider the function 
\begin{align}\label{coeff}
    a(x)=a\rr(x)+ia\ii(x)
\end{align}
with $a\rr,a\ii:\Om\mapsto\mathbb{R}$ be bounded measurable functions such that there exist $\nu,L>0$ satisfying
\begin{align}\label{ell}
    \nu<a\rr(x)-|a\ii(x)|,\quad a\rr(x)+|a\ii(x)|<L\quad\text{for a.e.}\quad x\in \Om.
\end{align}
Note that the above condition reduces to the usual ellipticity condition if the coefficient is real valued.
For complex valued coefficients, we define solutions to the  elliptic $p$-Laplace system as follows.
\begin{definition}
    Suppose $p\in(1,\infty)$, $\varepsilon\in[0,1]$, and let $a(x)$ satisfy \eqref{coeff} and \eqref{ell}. Suppose that $F\in L^{p'}(\Om;\mathbb{C}^{N\times n})$ where $p'$ is the H\"older conjugate of $p$. Then $u\in W^{1,p}(\Om;\mathbb{C}^N)$ is a weak solution to
    \begin{align}\label{eq}
        -\dv(a(x)( \varepsilon^2 + |\na u|^2)^\frac{p-2}{2}\na u)=-\dv F  \quad\text{in}\quad\Om
    \end{align}
    if for every $\varphi\in W_0^{1,p}(\Om;\C^N)$, there holds
    \[
    \int_{\Om} a(x)( \varepsilon^2 + |\na u|^2)^\frac{p-2}{2}\langle \na u,\na \varphi\rangle\,dx= \int_\Om \langle F,\na \varphi\rangle \,dx.
    \]
\end{definition}

\section{Structure lemmas}
Before we can prove any results for the complex valued $p$-Laplace system, we establish that the assumptions on the complex coefficient $a(x)$ gives rise to some structure conditions. The following Lemma easily follows from the definition of the complex inner product and the equivalence of norms observed earlier in \eqref{equiv_norms}.
\begin{lemma}\label{str}
    For any $F,G\in\C^{N\times n}$, there holds
    \begin{align*}
        \begin{split}
            &\langle (\varepsilon^2+|F|^2)^\frac{p-2}{2}F-(\varepsilon^2+|G|^2)^\frac{p-2}{2}G, F-G\rangle \\
            &= ( (\varepsilon^2+|\widehat{F}|^2)^\frac{p-2}{2}\widehat{F} - (\varepsilon^2 + |\widehat{G}|^2)^\frac{p-2}{2}\widehat{G} ) \cdot  ( \widehat{F}-\widehat{G}  )\\
            &\qquad+ i((\varepsilon^2+|\widehat{F}|^2)^\frac{p-2}{2}\widehat{F}-(\varepsilon^2+|\widehat{G}|^2)^\frac{p-2}{2}\widehat{G} )\cdot (\widecheck{F}-\widecheck{G}).
        \end{split}
    \end{align*}
\end{lemma}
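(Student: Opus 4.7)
The plan is to reduce both sides to the same quantity by unwinding the definitions. The key observation is that the scalar factors $(\varepsilon^2+|F|^2)^{(p-2)/2}$ and $(\varepsilon^2+|G|^2)^{(p-2)/2}$ are \emph{real} numbers, so the $\widehat{\,\cdot\,}$ and $\widecheck{\,\cdot\,}$ operations commute with them. Combined with the $\mathbb{R}$-linearity of those two operations and the norm identity \eqref{equiv_norms}, this should collapse the identity to a direct application of the defining formula $\langle F,G\rangle=\widehat{F}\cdot\widehat{G}+i\widehat{F}\cdot\widecheck{G}$.

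Concretely, I would first set
\[
A := (\varepsilon^2+|F|^2)^\frac{p-2}{2}F-(\varepsilon^2+|G|^2)^\frac{p-2}{2}G, \qquad B := F-G,
\]
and apply the identity from the preceding definition to obtain $\langle A,B\rangle = \widehat{A}\cdot\widehat{B}+i\,\widehat{A}\cdot\widecheck{B}$. Next, since $F\mapsto\widehat{F}$ is $\mathbb{R}$-linear and the scalars are real, I would compute
\[
\widehat{A} \;=\; (\varepsilon^2+|F|^2)^\frac{p-2}{2}\widehat{F}-(\varepsilon^2+|G|^2)^\frac{p-2}{2}\widehat{G},
\]
and then invoke \eqref{equiv_norms} to replace $|F|$ by $|\widehat{F}|$ and $|G|$ by $|\widehat{G}|$ inside the scalar factors, arriving at
\[
\widehat{A} \;=\; (\varepsilon^2+|\widehat{F}|^2)^\frac{p-2}{2}\widehat{F}-(\varepsilon^2+|\widehat{G}|^2)^\frac{p-2}{2}\widehat{G}.
\]
Similarly, $\mathbb{R}$-linearity of $\widehat{\,\cdot\,}$ and $\widecheck{\,\cdot\,}$ gives $\widehat{B}=\widehat{F}-\widehat{G}$ and $\widecheck{B}=\widecheck{F}-\widecheck{G}$. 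Substituting these into $\widehat{A}\cdot\widehat{B}+i\,\widehat{A}\cdot\widecheck{B}$ yields exactly the right-hand side of the claimed identity.

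There is no serious obstacle here; the entire argument is a careful bookkeeping exercise. The only point worth emphasizing is that the $\widehat{\,\cdot\,}$ and $\widecheck{\,\cdot\,}$ maps are $\mathbb{R}$-linear but not $\mathbb{C}$-linear, which is precisely why the real-valuedness of the scalar factor $(\varepsilon^2+|\cdot|^2)^{(p-2)/2}$ is essential for the computation to go through cleanly.
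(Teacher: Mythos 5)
Your argument is correct and is precisely the one the paper has in mind: the paper states only that the lemma "easily follows from the definition of the complex inner product and the equivalence of norms," and your careful unwinding — applying $\langle A,B\rangle=\widehat{A}\cdot\widehat{B}+i\,\widehat{A}\cdot\widecheck{B}$, using $\mathbb{R}$-linearity of the hat and check maps together with the real-valuedness of the scalar factors, and invoking \eqref{equiv_norms} — supplies exactly those omitted details.
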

The real part of the righ-hand side in the previous equation satisfies the following coercivity condition. The proof follows directly from the structure condition of the real valued $p$-Laplace operator.
\begin{lemma}\label{str1}
    For any $F,G\in\C^{N\times n}$, there exists a constant $c_1>0$ such that
    \[
    c_1(\varepsilon^2+|F|^2+|G|^2)^\frac{p-2}{2}|F-G|^2\le ((\varepsilon^2+|\widehat{F}|^2)^\frac{p-2}{2}\widehat{F}-(\varepsilon^2+|\widehat{G}|^2)^\frac{p-2}{2}\widehat{G} ) \cdot  ( \widehat{F}-\widehat{G}  ) 
    \]
\end{lemma}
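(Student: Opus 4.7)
The plan is to reduce the statement to the classical monotonicity estimate for the real-valued $p$-Laplace nonlinearity via the map $F\mapsto \widehat{F}$, which embeds $\C^{N\times n}$ isometrically into $\mathbb{R}^{2N\times n}$. Specifically, I would invoke the well-known inequality (see e.g.\ \cite{MR1230384}) asserting that for every integer $M\ge 1$ and every $p\in(1,\infty)$ there exists $c_1=c_1(p)>0$ such that
\[
c_1(\varepsilon^2+|X|^2+|Y|^2)^{\frac{p-2}{2}}|X-Y|^2\le \bigl((\varepsilon^2+|X|^2)^{\frac{p-2}{2}}X-(\varepsilon^2+|Y|^2)^{\frac{p-2}{2}}Y\bigr)\cdot(X-Y)
\]
for all $X,Y\in\mathbb{R}^M$.

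My next step is to apply this inequality in the space $\mathbb{R}^{2N\times n}$ with the choices $X=\widehat{F}$ and $Y=\widehat{G}$. This immediately produces precisely the right-hand side of the claimed estimate. For the left-hand side, I would use two elementary observations drawn directly from the preliminaries: first, the hat operation is $\mathbb{R}$-linear, so $\widehat{F}-\widehat{G}=\widehat{F-G}$; second, the norm identity \eqref{equiv_norms} gives $|\widehat{F}|_{\mathbb{R}^{2N\times n}}=|F|_{\C^{N\times n}}$, $|\widehat{G}|_{\mathbb{R}^{2N\times n}}=|G|_{\C^{N\times n}}$, and hence also $|\widehat{F}-\widehat{G}|_{\mathbb{R}^{2N\times n}}=|F-G|_{\C^{N\times n}}$. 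Substituting these identities converts the left-hand side of the real-valued inequality into the left-hand side of the lemma.

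Since the entire argument reduces to a one-line application of a standard inequality followed by an isometric identification, I do not expect any real obstacle. The only point that needs a brief check is that the hat map preserves both norms and the $\mathbb{R}$-linear structure, which is immediate from the definitions in Section 2. Observe also that the resulting constant $c_1$ depends only on $p$ (and not on $N$ or $n$), exactly as in the real-valued case.
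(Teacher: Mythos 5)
Your argument is exactly the one the paper intends: apply the classical real-valued monotonicity estimate in $\mathbb{R}^{2N\times n}$ to $\widehat{F},\widehat{G}$ and use the isometry \eqref{equiv_norms} (together with the $\mathbb{R}$-linearity of the hat map) to rewrite the left-hand side. The paper only remarks that the lemma ``follows directly from the structure condition of the real valued $p$-Laplace operator and the norm equivalence,'' which is precisely your reasoning spelled out.
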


The following two lemmas also follow from the structure condition of the real valued $p$-Laplace operator and the norm equivalence.
\begin{lemma}\label{str2}
    For any $F,G\in\C^{N\times n}$, there exists a constant $c_2>0$ such that
    \[
    |(\varepsilon^2+|F|^2)^\frac{p-2}{2}F-(\varepsilon^2+|G|^2)^\frac{p-2}{2}G|\le c_2 (\varepsilon^2+|F|^2+|G|^2)^\frac{p-2}{2}|F-G|.
    \]
\end{lemma}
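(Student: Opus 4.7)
The plan is to reduce Lemma \ref{str2} to the well-known real-valued analogue by exploiting the $\R$-linear embedding $F \mapsto \widehat{F}$ together with the norm equivalence \eqref{equiv_norms}. Recall that the real-valued structure lemma (see e.g.\ the references \cite{MR1230384,acerbi2002regularity}) states that for all $X,Y \in \R^{M\times n}$ and any $M\ge 1$, there is a constant depending only on $p$ such that
\[
\bigl|(\varepsilon^2+|X|^2)^\frac{p-2}{2}X - (\varepsilon^2+|Y|^2)^\frac{p-2}{2}Y\bigr| \le c\,(\varepsilon^2+|X|^2+|Y|^2)^\frac{p-2}{2}|X-Y|.
\]
We will apply this with $M = 2N$, $X = \widehat{F}$, $Y = \widehat{G}$.

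First, observe that the scalar $(\varepsilon^2+|F|^2)^\frac{p-2}{2}$ is a real nonnegative number, so by the definition of $cF$ for real $c$ in Definition \ref{def_complex_inner} and the $\R$-linearity of $F\mapsto\widehat{F}$, we have
\[
\widehat{(\varepsilon^2+|F|^2)^\frac{p-2}{2}F} = (\varepsilon^2+|F|^2)^\frac{p-2}{2}\widehat{F} = (\varepsilon^2+|\widehat{F}|^2)^\frac{p-2}{2}\widehat{F},
\]
where in the last step we used the norm equivalence $|F|_{\C^{N\times n}} = |\widehat{F}|_{\R^{2N\times n}}$. The analogous identity holds for $G$. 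Subtracting and using $\R$-linearity of $\widehat{\,\cdot\,}$ once more, we find
\[
\widehat{(\varepsilon^2+|F|^2)^\frac{p-2}{2}F - (\varepsilon^2+|G|^2)^\frac{p-2}{2}G} = (\varepsilon^2+|\widehat{F}|^2)^\frac{p-2}{2}\widehat{F} - (\varepsilon^2+|\widehat{G}|^2)^\frac{p-2}{2}\widehat{G}.
\]

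Next, applying \eqref{equiv_norms} to the object on the left-hand side above, the $\C^{N\times n}$-norm of the expression we want to estimate coincides with the $\R^{2N\times n}$-norm of the right-hand side. Applying the real-valued structure inequality with $X=\widehat{F}$ and $Y=\widehat{G}$, and then converting the norms back via \eqref{equiv_norms} (together with the elementary observation that $|\widehat{F}-\widehat{G}|_{\R^{2N\times n}} = |F-G|_{\C^{N\times n}}$, which again follows from $\R$-linearity of $\widehat{\,\cdot\,}$ and \eqref{equiv_norms}), we obtain the desired bound with $c_2$ equal to the constant from the real-valued lemma.

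There is no substantial obstacle: the entire argument is a mechanical bookkeeping exercise once one notices that multiplication by the real nonnegative factor $(\varepsilon^2+|F|^2)^{(p-2)/2}$ commutes with $\widehat{\,\cdot\,}$ and that this factor only depends on $|F|_{\C^{N\times n}} = |\widehat{F}|_{\R^{2N\times n}}$. The only point requiring any care is to verify that $\widehat{\,\cdot\,}$ indeed commutes with scalar multiplication by reals and with subtraction, which is immediate from Definition \ref{def_complex_inner}.
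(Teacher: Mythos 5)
Your proposal is correct and takes essentially the same route as the paper, which simply asserts that Lemma~\ref{str2} ``follows from the structure condition of the real valued $p$-Laplace operator and the norm equivalence'' \eqref{equiv_norms}. You have spelled out precisely the bookkeeping (the $\R$-linearity of $\widehat{\,\cdot\,}$, the commutation with multiplication by the real scalar $(\varepsilon^2+|F|^2)^{(p-2)/2}$, and the identification $|F|_{\C^{N\times n}}=|\widehat{F}|_{\R^{2N\times n}}$) that the paper leaves implicit.
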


\begin{lemma}\label{str3}
    For any $F,G\in\C^{N\times n}$, there exists $c=c(p)$ such that 
\begin{align*}
    \begin{split}
            c^{-1}|(\varepsilon^2+|F|^2)^\frac{p-2}{4}F-(\varepsilon^2+|G|^2)^\frac{p-2}{4}G|^2
    &\le(\varepsilon^2+|F|^2+|G|^2)^\frac{p-2}{2}|F-G|^2\\
    &\le c|(\varepsilon^2+|F|^2)^\frac{p-2}{4}F-(\varepsilon^2+|G|^2)^\frac{p-2}{4}G|^2.
    \end{split}
\end{align*}
\end{lemma}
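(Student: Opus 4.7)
The plan is to reduce the statement to the corresponding inequality for the real-valued $p$-Laplace operator, which is a classical result (see e.g.\ Acerbi--Fusco or DiBenedetto), and then exploit the norm equivalence observed in \eqref{equiv_norms}.

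First I would note that the map $F\mapsto \widehat F$ is $\R$-linear from $\C^{N\times n}$ into $\R^{2N\times n}$, so
\[
\widehat{(\varepsilon^2+|F|^2)^\frac{p-2}{4}F} = (\varepsilon^2+|F|^2)^\frac{p-2}{4}\widehat F = (\varepsilon^2+|\widehat F|^2)^\frac{p-2}{4}\widehat F,
\]
where in the last equality I used \eqref{equiv_norms}. The analogous identity holds for $G$. Combining these identities with \eqref{equiv_norms} applied to the difference of the ``$V$-functions'' and to $F-G$, one sees that
\[
\bigl|(\varepsilon^2+|F|^2)^\frac{p-2}{4}F-(\varepsilon^2+|G|^2)^\frac{p-2}{4}G\bigr|_{\C^{N\times n}} = \bigl|(\varepsilon^2+|\widehat F|^2)^\frac{p-2}{4}\widehat F-(\varepsilon^2+|\widehat G|^2)^\frac{p-2}{4}\widehat G\bigr|_{\R^{2N\times n}},
\]
and similarly $(\varepsilon^2+|F|^2+|G|^2)^\frac{p-2}{2}|F-G|^2 = (\varepsilon^2+|\widehat F|^2+|\widehat G|^2)^\frac{p-2}{2}|\widehat F-\widehat G|^2$.

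Thus the desired two-sided estimate for $F,G\in\C^{N\times n}$ is equivalent to the same two-sided estimate applied to $\widehat F,\widehat G\in\R^{2N\times n}$. At this point I would invoke the classical real-valued inequality, which asserts exactly
\[
c^{-1}\bigl|(\varepsilon^2+|X|^2)^\frac{p-2}{4}X-(\varepsilon^2+|Y|^2)^\frac{p-2}{4}Y\bigr|^2 \le (\varepsilon^2+|X|^2+|Y|^2)^\frac{p-2}{2}|X-Y|^2 \le c\bigl|(\varepsilon^2+|X|^2)^\frac{p-2}{4}X-(\varepsilon^2+|Y|^2)^\frac{p-2}{4}Y\bigr|^2,
\]
for $X,Y$ in any Euclidean space, with $c=c(p)$. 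Applying this to $X=\widehat F$ and $Y=\widehat G$ and translating back via the identities above yields the claim.

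There is essentially no obstacle here: everything follows from the $\R$-linearity of the hat map together with \eqref{equiv_norms}. The only mild subtlety is to verify that $(\varepsilon^2+|F|^2)^\frac{p-2}{4}F$ genuinely corresponds under the hat map to $(\varepsilon^2+|\widehat F|^2)^\frac{p-2}{4}\widehat F$, and this is immediate once one notices that the scalar factor $(\varepsilon^2+|F|^2)^\frac{p-2}{4}$ is real and depends only on $|F|=|\widehat F|$. The underlying real-valued inequality itself is standard and is usually proved via the fundamental theorem of calculus applied to $t\mapsto (\varepsilon^2+|tX+(1-t)Y|^2)^\frac{p-2}{4}(tX+(1-t)Y)$ combined with elementary estimates, but no new argument is required for the complex-valued version.
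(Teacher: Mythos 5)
Your argument is correct and matches the paper's own (one-line) justification: the paper likewise reduces Lemmas~\ref{str2} and~\ref{str3} to the real-valued structure inequalities via the norm equivalence \eqref{equiv_norms}. Your proof simply spells out the reduction in more detail, in particular the key observation that the scalar weight $(\varepsilon^2+|F|^2)^{\frac{p-2}{4}}$ is real and depends only on $|F|=|\widehat F|$, so the hat map intertwines the two $V$-functions.
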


\section{Existence}\label{sec:existence}
The aim of this section is to prove the existence and uniqueness of weak solutions to the Dirichlet problem associated with the elliptic system. Besides the ellipticity condition \eqref{ell}, we will also assume that
\begin{align}\label{ell2}
    c_1a\rr(x)-c_2|a\ii(x)|>\nu,
\end{align}
where $c_1,c_2$ are in Lemma~\ref{str1} and Lemma~\ref{str2}, respectively. This condition imposes the room for oscillation of $a\ii$. Note that $c_1\equiv c_2\equiv1$ if $p=2$, which is the first condition of \eqref{ell}. 
As far as we are concerned, the best constants for Lemma~\ref{str1} and Lemma~\ref{str2} remain unknown unless $p=2$. For existing results, we refer to \cite[Lemma 2.1 and Lemma 2.2]{acerbi1989regularity} and \cite[Lemma 8.3]{MR1962933}, which provide $c_1=\tfrac{1}{3}(\tfrac
{1}{3\sqrt{2}})^{p-2}$ for $p\ge2$, and $c_1=p-1$ for $p<2$, and $c_2=p-1$ when $p\ge2$, while $c_2=8$ when $p<2$.

The role of \eqref{ell2} is more visible in the standard energy estimate. We first prove the uniqueness of weak solutions.

\begin{definition}
    For the coefficient function $a:\Om\mapsto \C$, we denote $a\in\mathcal{E}(\Om)$ if it satisfies \eqref{coeff}, \eqref{ell} and \eqref{ell2}, 
\end{definition}

\begin{theorem}\label{uniqueness}
   Let $a\in\mathcal{E}(\Om)$, $F\in L^{p'}(\Om;\C^{N\times n})$ and $g\in W^{1,p}(\Om;\C^N)$ be given. Then there is at most one weak  solution to \eqref{eq} in $g+W_0^{1,p}(\Om;\C^N)$. Here, $u\in g+W_0^{1,p}(\Om;\C^N)$ means $u-g\in W_0^{1,p}(\Om;\C^N)$.
\end{theorem}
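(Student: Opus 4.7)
The plan is the standard energy/monotonicity argument adapted to the complex setting, with the crucial point being that condition~\eqref{ell2} is precisely what makes the real part of the integrated difference of weak formulations coercive in the seminorm $\|\nabla(u_1-u_2)\|_p$.

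Assume $u_1, u_2 \in g + W_0^{1,p}(\Omega; \C^N)$ are both weak solutions. Then $\varphi := u_1 - u_2 \in W_0^{1,p}(\Omega; \C^N)$ is an admissible test function. Subtracting the two weak formulations (the right-hand sides cancel) yields
\[
\int_\Omega a(x)\,\bigl\langle (\varepsilon^2+|\nabla u_1|^2)^{\frac{p-2}{2}}\nabla u_1 - (\varepsilon^2+|\nabla u_2|^2)^{\frac{p-2}{2}}\nabla u_2,\ \nabla(u_1-u_2)\bigr\rangle\,dx = 0.
\]
Taking the real part and applying Lemma~\ref{str} with $F = \nabla u_1$, $G = \nabla u_2$, the integrand at each $x$ has the form $(a^R + i a^I)(A + iB)$ whose real part is $a^R A - a^I B$, where $A \ge 0$ by Lemma~\ref{str1} and $|B| \le c_2(\varepsilon^2 + |F|^2 + |G|^2)^{\frac{p-2}{2}}|F-G|^2$ by Lemma~\ref{str2} (using the norm equivalence $|\widecheck{F}-\widecheck{G}| = |F-G|$). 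Since $a^R > |a^I| \ge 0$ by \eqref{ell}, we may bound $a^R A - a^I B \ge (c_1 a^R - c_2|a^I|)(\varepsilon^2 + |F|^2 + |G|^2)^{\frac{p-2}{2}}|F-G|^2 \ge \nu (\varepsilon^2 + |F|^2 + |G|^2)^{\frac{p-2}{2}}|F-G|^2$ thanks to \eqref{ell2}.

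Integrating, this gives
\[
\nu \int_\Omega (\varepsilon^2 + |\nabla u_1|^2 + |\nabla u_2|^2)^{\frac{p-2}{2}} |\nabla(u_1-u_2)|^2 \, dx \le 0.
\]
The integrand is nonnegative, so it vanishes almost everywhere. From this one deduces $\nabla(u_1 - u_2) = 0$ a.e.: when $p \ge 2$ the weight $(\varepsilon^2 + |\nabla u_1|^2 + |\nabla u_2|^2)^{(p-2)/2}$ is bounded below by $c\,|\nabla(u_1-u_2)|^{p-2}$, yielding $\int|\nabla(u_1-u_2)|^p = 0$ directly; when $p < 2$ and $\varepsilon > 0$ the weight is strictly positive everywhere; when $p < 2$ and $\varepsilon = 0$, the relation is vacuous on the set where both gradients vanish (which forces $\nabla(u_1-u_2) = 0$ there anyway) and elsewhere the weight is positive. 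In all cases, the Poincar\'e inequality applied to $u_1 - u_2 \in W_0^{1,p}(\Omega;\C^N)$ then gives $u_1 = u_2$.

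The only nontrivial step is the chain of inequalities estimating $a^R A - a^I B$ from below: this is exactly where \eqref{ell2} has been introduced so that the gain from $a^R A$ dominates the loss from $|a^I B|$, the latter being controlled only with the suboptimal constant $c_2$ from Lemma~\ref{str2} rather than $c_1$. Everything else is routine: the sign structure of the complex inner product via Lemma~\ref{str}, and the standard terminal case analysis on $p$ and $\varepsilon$ to pass from the weighted integral identity to $\nabla(u_1 - u_2) = 0$.
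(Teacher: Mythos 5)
Your proof is correct and follows the same approach as the paper: subtract the weak formulations, test with the difference, take the real part via Lemma~\ref{str}, combine Lemma~\ref{str1} and Lemma~\ref{str2} with \eqref{ell2} to get coercivity, and conclude. Your extra case analysis explaining why the vanishing weighted integral forces $\nabla(u_1-u_2)=0$ a.e.\ (covering $p\ge 2$, $p<2$ with $\varepsilon>0$, and $p<2$ with $\varepsilon=0$) is a helpful elaboration of a step the paper states without comment.
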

\begin{proof}
 Suppose that $u,w\in g+W^{1,p}_0(\Om;\C^N)$ are both weak solutions to \eqref{eq}. Then we have
 \[
 -\dv(a(x)  (  (\varepsilon^2+|\na u|^2)^\frac{p-2}{2}\na u   -  (\varepsilon^2+|\na w|^2)^\frac{p-2}{2}\na w)  )=0.
 \]
Taking $u-w\in W_0^{1,p}(\Om;\C^N)$ as a test function, we have
\[
\int_{\Om} a(x) \langle  (\varepsilon^2+|\na u|^2)^\frac{p-2}{2}\na u   -  (\varepsilon^2+|\na w|^2)^\frac{p-2}{2}\na w), \na u-\na w   \rangle\,dx=0.
\]
Taking the real part of this equation we have by Lemma~\ref{str} that
\begin{align*}
    \begin{split}
        0&=\int_{\Om} a\rr(x) ( (\varepsilon^2+|\na \widehat{u}|^2)^\frac{p-2}{2}\na \widehat{u}   -  (\varepsilon^2+|\na \widehat{w}|^2)^\frac{p-2}{2}\na \widehat{w} \cdot( \na \widehat{u}-\na \widehat{w} )  \,dx\\
        &\qquad-\int_{\Om} a\ii(x) ( (\varepsilon^2+|\na \widehat{u}|^2)^\frac{p-2}{2}\na \widehat{u}   -  (\varepsilon^2+|\na \widehat{w}|^2)^\frac{p-2}{2}\na \widehat{w} \cdot( \na \widecheck{u}-\na \widecheck{w} ) \,dx.
    \end{split}
\end{align*}
Employing Lemma~\ref{str1}, Lemma~\ref{str2} and \eqref{ell2}, we get
\begin{align*}
    \begin{split}
        0&\ge\int_{\Om} c_1a\rr(x) ( \varepsilon^2+|\na u|^2+|\na w|^2  )^\frac{p-2}{2}|\na u-\na w|^2\,dx\\
        &\qquad-\int_{\Om} c_2|a\ii(x)| ( (\varepsilon^2+|\na u|^2 +|\na w|^2 )^\frac{p-2}{2} |\na u-\na w|^2 \,dx\\
        &\ge\int_{\Om} \nu ( \varepsilon^2+|\na u|^2+|\na w|^2  )^\frac{p-2}{2}|\na u-\na w|^2\,dx.
    \end{split}
\end{align*}
 Hence, it follows $\na u\equiv \na w$ and thus, $u\equiv w$.
\end{proof}

Next, we state the existence result in the case of vanishing boundary data.
\begin{theorem}\label{thm_ex}
    Let $a\in\mathcal{E}(\Om)$ and $F\in L^{p'}(\Om;\C^{N\times n})$ be given. Then, there exists a weak solution $u\in W_0^{1,p}(\Om;\C^N)$ to \eqref{eq}.
\end{theorem}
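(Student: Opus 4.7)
The plan is to recast \eqref{eq} as an operator equation $A(u)=T$ on the reflexive real Banach space $V:=W_0^{1,p}(\Om;\C^N)\cong W_0^{1,p}(\Om;\R^{2N})$ and apply the Browder--Minty theorem. The key preliminary observation is that the complex inner product of Definition~\ref{def_complex_inner} is sesquilinear, satisfying $\langle\cdot,i\cdot\rangle=-i\langle\cdot,\cdot\rangle$; replacing $\varphi$ by $-i\varphi$ in the weak formulation therefore swaps its real and imaginary parts, so requiring the complex identity
\[
\int_\Om a(x)(\varepsilon^2+|\na u|^2)^{\tfrac{p-2}{2}}\langle \na u,\na\varphi\rangle\,dx=\int_\Om\langle F,\na\varphi\rangle\,dx
\]
for every $\varphi\in V$ is equivalent to requiring only its real part to hold. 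I would then define $A:V\to V^*$ and $T\in V^*$ by
\[
\langle A(u),\varphi\rangle:=\re\int_\Om a(x)(\varepsilon^2+|\na u|^2)^{\tfrac{p-2}{2}}\langle\na u,\na\varphi\rangle\,dx,\qquad \langle T,\varphi\rangle:=\re\int_\Om\langle F,\na\varphi\rangle\,dx,
\]
and reduce the existence problem to solving $A(u)=T$ in $V^*$.

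The bulk of the work is to verify the four hypotheses of Browder--Minty. Boundedness of $A:V\to V^*$ is routine: the integrand is pointwise controlled by $2L(\varepsilon^{p-1}+|\na u|^{p-1})|\na\varphi|$, which after H\"older yields $\|A(u)\|_{V^*}\leq C(1+\|\na u\|_p^{p-1})$. Monotonicity is precisely the computation carried out in the proof of Theorem~\ref{uniqueness}: combining Lemma~\ref{str}, Lemma~\ref{str1}, Lemma~\ref{str2} with the strengthened ellipticity \eqref{ell2} yields
\[
\langle A(u)-A(w),u-w\rangle\geq \nu\int_\Om(\varepsilon^2+|\na u|^2+|\na w|^2)^{\tfrac{p-2}{2}}|\na u-\na w|^2\,dx.
\]
Setting $w=0$ produces the coercivity estimate $\langle A(u),u\rangle\geq \nu\int_\Om(\varepsilon^2+|\na u|^2)^{\tfrac{p-2}{2}}|\na u|^2\,dx$; for $p\geq 2$ this is already bounded below by $\nu\|\na u\|_p^p$, while for $1<p<2$ a short case split on $\{|\na u|\geq\varepsilon\}$ versus $\{|\na u|<\varepsilon\}$ gives $\langle A(u),u\rangle\geq c\|\na u\|_p^p-c\varepsilon^p|\Om|$. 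In either regime the Poincar\'e inequality forces $\langle A(u),u\rangle/\|u\|_V\to\infty$ as $\|u\|_V\to\infty$. Hemicontinuity of the map $t\mapsto\langle A(u+tv),w\rangle$ follows from dominated convergence, the integrand being continuous in $t$ and dominated by an $L^1$ majorant of the form $C(1+|\na u|+|\na v|)^{p-1}|\na w|$.

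Once all four properties are checked, the Browder--Minty theorem immediately produces $u\in V$ with $A(u)=T$, which by the equivalence in the first step is the desired weak solution of \eqref{eq} with vanishing trace. The main conceptual point is really only the translation from the complex sesquilinear framework to a real monotone-operator equation; once that step is in place, the structural lemmas of Section~4 furnish all of the required estimates essentially for free, with the degenerate coercivity case $\varepsilon>0$, $1<p<2$ being the only spot that requires a moment of bookkeeping.
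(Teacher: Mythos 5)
Your proof is correct but follows a genuinely different route from the paper. The paper essentially re-derives the Browder--Minty surjectivity theorem ``by hand'' in the complex sesquilinear setting: it constructs approximate solutions $u_m$ on finite-dimensional Galerkin subspaces $W_m$ using a complex adaptation of the Brouwer zero theorem (Lemma~\ref{Brouwer_cor}), obtains uniform energy bounds, extracts weak limits of $u_m$ and of the nonlinear flux $a(\varepsilon^2+|\na u_m|^2)^{\frac{p-2}{2}}\na u_m$, and identifies the limit flux via the Minty monotonicity trick; the same observation you place at the very start --- that requiring only $\re E(\varphi)=0$ for all $\varphi$ forces the full complex identity via the substitution $\varphi\mapsto i\varphi$ --- is used there at the end, when passing from the real-part inequality to \eqref{div-eq}. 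You instead reformulate the problem from the outset as a monotone operator equation $A(u)=T$ over the real reflexive Banach space $V\cong W_0^{1,p}(\Om;\R^{2N})$, check boundedness, monotonicity (via the computation in Theorem~\ref{uniqueness} using Lemmas~\ref{str}, \ref{str1}, \ref{str2} and \eqref{ell2}), coercivity and hemicontinuity, and cite Browder--Minty as a black box. Your route is shorter and arguably cleaner conceptually; the paper's is self-contained and stays with the complex structure throughout. Your pointwise case split on $\{|\na u|\geq\varepsilon\}$ versus $\{|\na u|<\varepsilon\}$ to handle coercivity when $1<p<2$ is a correct alternative to the H\"older-based manipulation the paper uses for the same purpose, and your domination of the hemicontinuity integrand by $C(1+|\na u|+|\na v|)^{p-1}|\na w|$ is valid for $t$ ranging over a compact interval. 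One should check, as you implicitly do, that $A(0)=0$ so that setting $w=0$ in the monotonicity inequality indeed yields the stated coercivity bound, and that the real-part pairing $\langle A(u),\varphi\rangle$ is $\R$-linear in $\varphi$ (this follows since $\langle\na u,\na\varphi\rangle$ is conjugate-linear, hence $\R$-linear, in $\varphi$); both facts are immediate, and the argument is sound.
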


The proof is an adaptation of the standard argument of the real valued monotone operator theory. We first recall the Schauder fixed-point theorem.
\begin{lemma}\label{Brouwer}
    Let $K$ be a nonempty convex compact subset of a Banach space. Then, every continuous function from $K$ to itself has a fixed point.
\end{lemma}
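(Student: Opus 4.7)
The plan is to reduce the infinite-dimensional statement to the classical Brouwer fixed-point theorem (continuous self-maps of a compact convex subset of a finite-dimensional normed space have a fixed point), which I will take as a known topological fact. The key tool is the \emph{Schauder projection}, a continuous map that approximates the identity on $K$ by a map into a finite-dimensional compact convex subset of $K$.

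First I would use compactness to produce, for each $\varepsilon>0$, a finite set $\{x_1,\dots,x_{N_\varepsilon}\}\subset K$ whose open $\varepsilon$-balls cover $K$. Setting $\varphi_i(x)=\max(\varepsilon-\|x-x_i\|,0)$ and $\psi_i(x)=\varphi_i(x)/\sum_j\varphi_j(x)$ gives a continuous partition of unity on $K$ (the denominator never vanishes because the balls cover $K$). I then define the Schauder projection
\[
P_\varepsilon:K\to K,\qquad P_\varepsilon(x)=\sum_{i=1}^{N_\varepsilon}\psi_i(x)\,x_i,
\]
which lands in the convex hull $K_\varepsilon:=\mathrm{conv}\{x_1,\dots,x_{N_\varepsilon}\}$; here the convexity of $K$ is used to guarantee $K_\varepsilon\subset K$. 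A direct estimate gives $\|P_\varepsilon(x)-x\|\le\varepsilon$ for every $x\in K$, since $\psi_i(x)=0$ whenever $\|x-x_i\|\ge\varepsilon$.

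Next, observe that $K_\varepsilon$ is a nonempty compact convex subset of the finite-dimensional affine span of $\{x_1,\dots,x_{N_\varepsilon}\}$, and the composition $P_\varepsilon\circ f:K_\varepsilon\to K_\varepsilon$ is continuous. By Brouwer's theorem there exists $y_\varepsilon\in K_\varepsilon$ with $P_\varepsilon(f(y_\varepsilon))=y_\varepsilon$. Applying this with $\varepsilon=1/k$ yields a sequence $(y_k)\subset K$; by compactness of $K$ some subsequence converges to a point $y\in K$. From $\|f(y_k)-y_k\|=\|f(y_k)-P_{1/k}(f(y_k))\|\le 1/k$ and the continuity of $f$, passing to the limit along the subsequence gives $f(y)=y$.

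The statement is classical and essentially textbook, so I do not expect a serious obstacle; the one point that deserves care is verifying $P_\varepsilon(K)\subset K$, which is precisely where the convexity hypothesis on $K$ enters, and checking that $P_\varepsilon\circ f$ can indeed be viewed as a self-map of the finite-dimensional set $K_\varepsilon$ so that Brouwer's theorem applies in its standard finite-dimensional form.
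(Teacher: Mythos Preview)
Your argument is correct and is in fact the standard proof of the Schauder fixed-point theorem via Schauder projections reducing to the finite-dimensional Brouwer theorem. The paper, however, does not prove this lemma at all: it is simply stated as a known result (``We first recall the Schauder fixed-point theorem'') and used as a black box. So there is nothing to compare --- you have supplied a valid proof where the paper gives none.
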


Next, we state and prove a variant a standard corollary of the fixed point theorem, adapted to the complex setting. The strategy of the proof is similar to the real valued case.
\begin{lemma}\label{Brouwer_cor}
    Let $m\ge1$ be a positive integer and $g:\C^{m}\to \C^{m}$  be a continuous map. Suppose there exists $\rho>0$ such that
    \begin{align*}
        \re \langle g(z), z \rangle\ge 0\quad\text{for all}\quad z\in\C^{m}\quad\text{with}\quad |z|=\rho.
    \end{align*}
    Then there exists $z_0\in \C^{m}$ with $|z_0|\le \rho$ such that $g(z_0)=0.$
\end{lemma}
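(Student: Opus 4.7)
The plan is to argue by contradiction using Lemma~\ref{Brouwer}, applied to the closed ball $K=\overline{B_\rho(0)}\subset\C^m$, which is a nonempty convex compact subset of $\C^m$ regarded as a $2m$-dimensional real Banach space.

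Suppose toward contradiction that $g(z)\neq 0$ for every $z\in K$. Then the map $h:K\to K$ defined by
\[
h(z) = -\frac{\rho\, g(z)}{|g(z)|}
\]
is continuous and takes values in $\partial B_\rho(0)\subset K$. By Lemma~\ref{Brouwer}, there exists a point $z_*\in K$ with $z_*=h(z_*)$, and in particular $|z_*|=\rho$.

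Next I would exploit the fact that the scaling factor $-\rho/|g(z_*)|$ is a real scalar, so it can be pulled out of the complex inner product of Definition~\ref{def_complex_inner} without conjugation. Together with $\langle g(z_*),g(z_*)\rangle = |g(z_*)|^2$ (the imaginary part vanishes by direct inspection of the defining formula), this gives
\[
\langle g(z_*), z_*\rangle = -\frac{\rho}{|g(z_*)|}\,\langle g(z_*),g(z_*)\rangle = -\rho\,|g(z_*)|,
\]
and hence $\re\langle g(z_*), z_*\rangle = -\rho|g(z_*)| < 0$. This contradicts the hypothesis $\re\langle g(z),z\rangle\ge 0$ on the sphere $|z|=\rho$, so $g$ must vanish at some point of $K$.

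I do not foresee a real obstacle: the argument is the classical derivation of the no-retraction consequence of Brouwer's theorem, and the only genuinely complex-valued issue is keeping track of the sesquilinearity of $\langle\cdot,\cdot\rangle$, which is harmless here because the scalar multiplying $g(z_*)$ to obtain $z_*$ is real.
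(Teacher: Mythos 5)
Your proposal is correct and follows essentially the same contradiction argument as the paper: both define $h(z) = -\rho g(z)/|g(z)|$, obtain a fixed point $z_*$ with $|z_*|=\rho$ via Lemma~\ref{Brouwer}, and derive a sign contradiction in $\re\langle g(z_*), z_*\rangle$. The only cosmetic difference is that you substitute $z_*=h(z_*)$ into $\langle g(z_*),z_*\rangle$ directly to get $-\rho|g(z_*)|<0$, whereas the paper evaluates $\re\langle h(z_*),z_*\rangle$ in two ways to reach the same contradiction.
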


\begin{proof}
    We prove this by contradiction. Then, $g(z)\ne 0$ in $K=\{ z\in \C^{m} : |z|\le \rho\}$ and the following function
    \begin{align*}
        h(z)=-\rho\frac{g(z)}{|g(z)|}
    \end{align*}
    is well defined in $K$. By Lemma~\ref{Brouwer}, we find $w\in K$ such that $h(w)=w.$
    Note that since the range of $h$ lies in the sphere $\{ z\in \C^{1\times m} :|z|=\rho \}$, we have $|w|=\rho$. Thus it follows that
    \begin{align*}
       \langle h(w), w  \rangle=|w|^2=\rho^2>0.
    \end{align*}
    On the other hand, by the assumption on $g$ we have
    \begin{align*}
        \re \langle h(w), w  \rangle =\re \left\langle-\rho\frac{g(w)}{|g(w)| }, w\right\rangle=-\frac{\rho}{|g(w)|}\re\langle g(w), w\rangle\le 0.
    \end{align*}
   This is a contradiction and thus the conclusion of this lemma is true.    
\end{proof}

We now prove the existence result.

\begin{proof}[Proof of Theorem~\ref{thm_ex}]
Since $W_0^{1,p}(\Om;\C^N)$ coincides with all the functions of the form $\varphi^R+i\varphi^I$ where $\varphi^R,\varphi^I\in W_0^{1,p}(\Om;\mathbb{R}^N)$, it follows that $W_0^{1,p}(\Om;\C^N)$ is a separable space. Therefore, there exists a linearly independent countable subset $\{\varphi_j\}_{j\in\mathbb{N}}$ in $ W_0^{1,p}(\Om;\C^N)$ whose linear combinations are dense. For fixed $m\in \mathbb{N}$, we let $W_m$ denote the linear combinations of $\{\varphi_j\}_{1\le j\le m}$ over $\C$. For each $z\in \C^{m}$ with components $z_1,...,z_m\in \mathbb{C}$, consider the map defined as
\[
u(z)= \sum_{1\le j\le m}z_j\varphi_j\in W_m.
\]
We can now define the map $g=(g_1,...,g_m):\C^{m} \to \C^{m}$ where each component is given by
\[
g_j(z)=\int_{\Om} (a(\varepsilon^2+|\na u(z)|^2)^\frac{p-2}{2}\langle \na u(z), \na \varphi_j \rangle -\langle F, \na \varphi_j\rangle) \,dx 
\]
It is easy to see that
\[
\langle g(z),z\rangle =\int_{\Om} (a(\varepsilon^2+|\na u(z)|^2)^\frac{p-2}{2}\langle \na u(z), \na u(z) \rangle -\langle F, \na u(z) \rangle )\,dx.
\]
Taking the real part of this equation, we have that
\begin{align*}
    \begin{split}
        \re\langle g(z),z\rangle 
        &=\int_{\Om} a_1(\varepsilon^2+|\na u(z)|^2)^\frac{p-2}{2}|\na u(z)|^{2}\,dx-\re\int_\Om\langle F, \na u(z) \rangle \,dx\\
        &=\int_{\Om} a_1(\varepsilon^2+|\na u(z)|^2)^\frac{p-2}{2}|\na u(z)|^{2}\,dx-\int_\Om \widehat{F} \cdot \na\widehat{u}(z) \,dx\\
        &\ge\int_{\Om} \nu(\varepsilon^2+|\na u(z)|^2)^\frac{p-2}{2}|\na u(z)|^{2}\,dx-\int_\Om |F|| \na u(z)| \,dx,
    \end{split}
\end{align*}
where to obtain the last inequality, we used \eqref{ell}. In order to further estimate this expression we need to use different methods depending on the range of $p$.

\noindent \textit{Case $p\ge2$.} In this case, we have from H\"older inequality that
\begin{align*}
    \begin{split}
        \re\langle g(z),z\rangle 
        &\ge \nu\int_{\Om} |\na u(z)|^p\,dx-\left(\int_{\Om}| F|^{p'} \,dx\right)^\frac{p-1}{p}\left(\int_{\Om}| \na u(z)|^{p} \,dx\right)^\frac{1}{p}\\
        &\ge \left(\nu\left(\int_{\Om} |\na u(z)|^p\,dx\right)^\frac{p-1}{p}-\left(\int_{\Om}| F|^{p'} \,dx\right)^\frac{p-1}{p}\right)\left(\int_{\Om}| \na u(z)|^{p} \,dx\right)^\frac{1}{p}.
    \end{split}
\end{align*}
From the last estimate we can conclude that for some sufficiently large $\rho$,
\begin{align}\label{re-inner-prod-est}
\re\langle g(z),z\rangle \ge 0 \quad\text{for all}\quad |z|\ge\rho.
\end{align}
Indeed, we have
\[
0<\min_{|z|=1} \int_\Om |\na u(z)|^p\,dx,
\]
due to the linear independence of $\{\varphi_j\}_{1\le j\le m}$. This observation together with a scaling argument confirms \eqref{re-inner-prod-est}.

\noindent \textit{Case $p<2$.} By H\"older's inequality we have
\begin{align*}
    \begin{split}
        &\int_{\Om} |\na u(z)|^p\,dx\\
        &=\int_{\Om} (\varepsilon^2+|\na u(z)|^2)^\frac{(p-2)p}{4}|\na u(z)|^p(\varepsilon^2+|\na u(z)|^2)^\frac{(2-p)p}{4}\,dx\\
        &\le \left(\int_{\Om} (\varepsilon^2+|\na u(z)|^2)^\frac{p-2}{2}|\na u(z)|^2\,dx\right)^\frac{p}{2}\left(\int_{\Om} (\varepsilon^2+|\na u(z)|^2)^\frac{p}{2}\,dx\right)^\frac{2-p}{2}\\
        &\le\left(\int_{\Om} (\varepsilon^2+|\na u(z)|^2)^\frac{p-2}{2}|\na u(z)|^2\,dx\right)^\frac{p}{2}\left(2^\frac{p}{2}\int_{\Om} (\varepsilon^p+|\na u(z)|^p)\,dx\right)^\frac{2-p}{2}.
    \end{split}
\end{align*}
Therefore for sufficiently large $|z|$, we get
\[
\int_{\Om} |\na u(z)|^p\,dx\le \left(\int_{\Om} (\varepsilon^2+|\na u(z)|^2)^\frac{p-2}{2}|\na u(z)|^2\,dx\right)^\frac{p}{2}\left(2^{\frac{p}{2}+1}\int_{\Om} |\na u(z)|^p\,dx\right)^\frac{2-p}{2},
\]
or equivalently,
\[
\int_{\Om} |\na u(z)|^p\,dx\le 4\int_{\Om} (\varepsilon^2+|\na u(z)|^2)^\frac{p-2}{2}|\na u(z)|^2\,dx.
\]
Therefore, we again end up with the estimate
\begin{align*}
    \begin{split}
        \re\langle g(z),z\rangle 
        &\ge\left(c\left(\int_{\Om} |\na u(z)|^p\,dx\right)^\frac{p-1}{p}-\left(\int_{\Om}| F
        |^{p'} \,dx\right)^\frac{p-1}{p}\right)\left(\int_{\Om}| \na u(z)|^{p} \,dx\right)^\frac{1}{p}\\
        &\ge0,
    \end{split}
\end{align*}
for $|z| \geq \rho$ with sufficiently large $\rho>0$.

For both cases, applying Lemma~\ref{Brouwer_cor}, we find $u_m\in W_m$ such that
\begin{align}\label{fin_eq}
    \int_{\Om} a(\varepsilon^2+|\na u_m|^2)^\frac{p-2}{2}\langle \na u_m, \na \varphi_j \rangle -\langle F, \na \varphi_j\rangle \,dx =0\quad\text{for all}\quad 1\le j\le m.
\end{align}
As the above inequality implies
\[
\int_{\Om} a(\varepsilon^2+|\na u_m|^2)^\frac{p-2}{2}|\na u_m|^{2} \,dx =\int_\Om \langle F, \na u_m\rangle\,dx,
\]
we take the real part of both sides to get
\begin{align*}
    \int_{\Om} a_1(\varepsilon^2+|\na u_m|^2)^\frac{p-2}{2}|\na u_m|^{2} \,dx 
    &=\re\int_\Om \langle F, \na u_m\rangle\,dx\\
    &\le \int_\Om | F| |\na u_m|\,dx\\
    &\le \left(\int_\Om | F|^{p'}\,dx\right)^\frac{1}{p'} \left( \int_\Om|\na u_m|^p\,dx\right)^\frac{1}{p}.
\end{align*}
If $p\ge2$, the last estimate and \eqref{ell}, imply that
\[
    \int_{\Om} |\na u_m|^{p} \,dx \le c\int_\Om | F|^{p'}\,dx,
\]
for all $m\in\mathbb{N}$ where $c=c(p,\nu)$. 
On the other hand, if $p< 2$, to have a uniform upper bound for the norm of the gradients, it is sufficient to consider the case when
\[
\int_{\Om}\varepsilon^p\,dx\le \int_{\Om}|\na u_m|^p\,dx.
\]
Then, we have
\begin{align*}
    \begin{split}
        \int_{\Om} |\na u_m|^p\,dx
        &\le\left(\int_{\Om} (\varepsilon^2+|\na u_m|^2)^\frac{p-2}{2}|\na u_m|^2\,dx\right)^\frac{p}{2}\left(2^\frac{p}{2}\int_{\Om} (\varepsilon^p+|\na u_m|^p)\,dx\right)^\frac{2-p}{2}\\
        &\le\left(\int_{\Om} (\varepsilon^2+|\na u_m|^2)^\frac{p-2}{2}|\na u_m|^2\,dx\right)^\frac{p}{2}\left(2^{\frac{p}{2}+1}\int_{\Om} |\na u_m|^p\,dx\right)^\frac{2-p}{2}
    \end{split}
\end{align*}
Therefore, in any case we obtain
\begin{align}\label{fin_est}
    \int_{\Om} |\na u_m|^{p} \,dx \le c\int_\Om (| F|^{p'}+1)\,dx,
\end{align}
for all $m\in\mathbb{N}$ where $c=c(p,\nu)>0$. By the compactness of the embedding $W_0^{1,p}(\Om;\C^N) \rightarrow L^p(\Om;\C^N)$ we see that there is a subsequence, still labelled $(u_m)$, and an element $u \in L^p(\Om;\C^N)$ such that
\begin{align*}
 u_m \to u \quad \text{strongly in} \quad L^{p}(\Om;\C^{N\times n}).
\end{align*}
Since $W_m\subset W_0^{1,p}(\Om;\C^N)$ for all $m$, the uniform bound \eqref{fin_est} implies that also a subsequence of the gradients converges weakly. In fact, we see that $u\in W_0^{1,p}(\Om;\C^N)$ and for a suitable subsequence,
\[
       \na u_m\to \na u\quad\text{weakly in}\quad L^{p}(\Om;\C^{N\times n}).
\]
In particular the subsequence $(u_m)$ converges weakly to $u$ in $W_0^{1,p}(\Om;\C^N)$. On the other hand, as we have
\[
(\varepsilon^2+|\na u_m|^2)^\frac{p-2}{2}|\na u_m|\le(\varepsilon^2+|\na u_m|^2)^\frac{p-1}{2},
\]
it follows from \eqref{fin_est} that by passing to yet another subsequence there is an element $\mathcal{A}$ in $L^{p'}(\Om;\C^{N\times n})$ such that
\[
a(\varepsilon^2+|\na u_m|^2)^\frac{p-2}{2}\na u_m\to \mathcal{A}\quad \text{weakly in}\quad L^{p'}(\Om;\C^{N\times n}).
\]
In order to complete the proof, it only remains to prove that
\begin{align}\label{div-eq}
\nabla \cdot \mathcal{A} = \nabla \cdot (a(\varepsilon^2+|\na u|^2)^\frac{p-2}{2}\na u) \quad\text{in}\quad W^{-1,p'}(\Om;\C^N).
\end{align}
Fix $\psi \in W^{1,p}(\Om;\C^N)$.
Employing Lemma~\ref{str}, Lemma~\ref{str1}, Lemma~\ref{str2} and \eqref{ell2}, it follows that
\begin{align*}
    \begin{split}
        &\re\int_{\Om} a \langle(\varepsilon^2+|\na u_m|^2)^\frac{p-2}{2}\na u_m-(\varepsilon^2+|\na \psi|^2)^\frac{p-2}{2}\na \psi, \na u_m-\na \psi\rangle\,dx\\
        &\ge \int_{\Om}(c_1a_1(x) - c_2|a_2(x)|)(\varepsilon^2+|\na u_m|^2+|\na \psi|^2)^\frac{p-2}{2}|\na u_m-\na \psi|^2\,dx\\
        &\ge 0.
    \end{split}
\end{align*}
By splitting the term on the first row we obtain
\begin{align*}
    \begin{split}
        0
        &\le \re\int_{\Om} a \langle(\varepsilon^2+|\na u_m|^2)^\frac{p-2}{2}\na u_m, \na u_m-\na u\rangle\,dx
        \\
        &\qquad+\re\int_{\Om} a \langle(\varepsilon^2+|\na u_m|^2)^\frac{p-2}{2}\na u_m, \na u-\na \psi \rangle\,dx
        \\
        &\qquad-\re\int_{\Om} a \langle(\varepsilon^2+|\na \psi|^2)^\frac{p-2}{2}\na \psi, \na u_m-\na u\rangle\,dx
        \\
        &\qquad-\re\int_{\Om} a \langle(\varepsilon^2+|\na \psi|^2)^\frac{p-2}{2}\na \psi, \na u-\na \psi \rangle\,dx\\
        &=I+II+III+IV.
    \end{split}
\end{align*}
In order to estimate $I$, we choose elements $v_m$ in $W_m$ such that
\[
v_m\to u\quad\text{strongly in}\quad W_0^{1,p}(\Om;\mathbb{C}^N).
\]
Such $v_m$ exist since $u$ is the weak limit of the sequence $(u_m)$ and since the weak and strong closure of the subspace $\cup_{m\in \N} W_m$ coincide. Then we obtain
\begin{align*}
    \begin{split}
        I
        &=\re\int_{\Om} a (\varepsilon^2+|\na u_m|^2)^\frac{p-2}{2}  \langle\na u_m, \na u_m-\na v_m\rangle\,dx\\
        &\qquad+\re\int_{\Om} a (\varepsilon^2+|\na u_m|^2)^\frac{p-2}{2}  \langle\na u_m, \na v_m-\na u\rangle\,dx.
    \end{split}
\end{align*}
As we have \eqref{fin_eq}, the first term on the right hand side is equivalent to
\[
\re\int_{\Om}\langle F, \na  u_m-\na v_m\rangle\,dx,
\]
which converges to $0$ as $m$ goes to infinity due to the weak convergence of $u_n$ and $v_n$. We apply H\"older's inequality to the second term on the right hand side. Then, we get
\begin{align*}
    \begin{split}
        &\re\int_{\Om} a (\varepsilon^2+|\na u_m|^2)^\frac{p-2}{2} \langle\na u_m, \na v_m-\na u\rangle\,dx\\
        &\le L\int_{\Om}  (\varepsilon^2+|\na u_m|^2)^\frac{p-1}{2}|\na v_m-\na u|\,dx\\
        &\le c \left(\int_\Om |F|^{p'}+1\,dx\right)^\frac{1}{p'}\left(\int_{\Om}|\na v_m-\na u|^p\,dx\right)^\frac{1}{p},
    \end{split}
\end{align*}
where we used \eqref{ell} and \eqref{fin_est}. The last term converges to $0$ since $\na v_m$ converges strongly to $\na u$ as $m$ goes to infinity.  Therefore we conclude
\[
\lim_{m\to\infty}I=0.
\]
By the weak convergence, we have
\[
\lim_{m\to\infty}II=\re\int_{\Om}\langle \mathcal{A},\na u-\na \psi` \rangle\,dx
\]
and
\[
\lim_{m\to\infty}III=0.
\]

Combining these estimates, it follows that
\[
\re\int_{\Om} a (\varepsilon^2+|\na \psi|^2)^\frac{p-2}{2} \langle\na \psi, \na u-\na \psi \rangle\,dx\le \re\int_{\Om}\langle \mathcal{A},\na u-\na \psi \rangle\,dx,
\]
where $\psi \in W^{1,p}(\Om;\C^N)$ is arbitrary. Taking $s>0$ and $\psi = u - s\varphi$ for any $\varphi \in W_0^{1,p}(\Om;\C^N)$, we obtain
\[
\re\int_{\Om} a (\varepsilon^2+|\na u - s\na \varphi|^2)^\frac{p-2}{2} \langle(\na u - s\na \varphi), \na \varphi\rangle\,dx\le \re\int_{\Om}\langle \mathcal{A},\na \varphi\rangle\,dx.
\]
Letting $s$ to $0$, we get
\[
\re\int_{\Om} a (\varepsilon^2+|\na u|^2)^\frac{p-2}{2}  \langle\na u, \na \varphi\rangle\,dx\le \re\int_{\Om}\langle \mathcal{A},\na \varphi\rangle\,dx.
\]
Again, since $\varphi$ is arbitrary, by substituting $-\varphi$, we have
\[
\re\int_{\Om} a (\varepsilon^2+|\na u|^2)^\frac{p-2}{2}  \langle\na u, \na \varphi\rangle\,dx=\re\int_{\Om}\langle \mathcal{A},\na \varphi\rangle\,dx.
\]
On the other hand, by replacing $\varphi$ with $i\varphi$, it follows that
\begin{align*}
    \begin{split}
      \im\int_{\Om} a (\varepsilon^2+|\na u|^2)^\frac{p-2}{2} \langle\na u, \na \varphi\rangle\,dx
      &=\re -i\int_{\Om} a (\varepsilon^2+|\na u|^2)^\frac{p-2}{2}  \langle\na u, \na \varphi\rangle\,dx\\
      &=\re \int_{\Om} a (\varepsilon^2+|\na u|^2)^\frac{p-2}{2}  \langle\na u, i\na \varphi\rangle\,dx\\
      &=\re\int_{\Om}\langle \mathcal{A}, i\na \varphi\rangle\,dx\\
      &=\im\int_{\Om}\langle \mathcal{A}, \na \varphi\rangle\,dx.
    \end{split}
\end{align*}
Hence, we have confirmed \eqref{div-eq}. This completes the proof.
\end{proof}

As a consequence, we also have the existence of solutions to the Dirichlet problem with non-zero boundary values.
\begin{corollary}\label{thm_ex_non_bd}
   Let $a\in\mathcal{E}(\Om)$, $F\in L^{p'}(\Om;\C^{N\times n})$ and $g\in W^{1,p}(\Om;\C^N)$ be given. Then, there exists a weak solution $u$ to \eqref{eq} with $u-g\in W_0^{1,p}(\Om;\C^N)$.
\end{corollary}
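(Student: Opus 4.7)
The plan is to adapt the Galerkin construction in the proof of Theorem~\ref{thm_ex} to accommodate non-zero boundary data. Write $u = v + g$ with $v \in W_0^{1,p}(\Om;\C^N)$ unknown, so that the weak formulation reads
\[
\int_\Om a(\varepsilon^2+|\na v+\na g|^2)^{\frac{p-2}{2}}\langle\na v+\na g,\na\varphi\rangle\,dx = \int_\Om\langle F,\na\varphi\rangle\,dx
\]
for every $\varphi\in W_0^{1,p}(\Om;\C^N)$. Take $\{\varphi_j\}$ and $W_m$ as in the proof of Theorem~\ref{thm_ex}, set $v(z) = \sum_{j=1}^m z_j\varphi_j$, and define the Galerkin map $\Gamma : \C^m \to \C^m$ by replacing $\na u(z)$ with $\na v(z) + \na g$ in the construction used there.

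The key step is to verify the coercivity hypothesis of Lemma~\ref{Brouwer_cor}. Using the identity $\na v(z) = (\na v(z)+\na g) - \na g$ and taking real parts, one computes
\begin{align*}
\re\langle\Gamma(z),z\rangle &= \int_\Om a^R(\varepsilon^2+|\na v(z)+\na g|^2)^{\frac{p-2}{2}}|\na v(z)+\na g|^2\,dx\\
&\quad -\re\int_\Om a(\varepsilon^2+|\na v(z)+\na g|^2)^{\frac{p-2}{2}}\langle\na v(z)+\na g,\na g\rangle\,dx\\
&\quad -\re\int_\Om\langle F,\na v(z)\rangle\,dx.
\end{align*}
By \eqref{ell} the first term is bounded below by a multiple of $\int_\Om(\varepsilon^2+|\na v(z)+\na g|^2)^{\frac{p-2}{2}}|\na v(z)+\na g|^2\,dx$, while the second is controlled in absolute value by $L\int_\Om(\varepsilon^2+|\na v(z)+\na g|^2)^{\frac{p-1}{2}}|\na g|\,dx$. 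Young's inequality absorbs this cross term into the principal coercive contribution at the cost of a constant depending on $\|\na g\|_p$. Splitting into the cases $p\ge 2$ and $p<2$ and combining with the linear-independence/scaling argument of Theorem~\ref{thm_ex} yields $\re\langle\Gamma(z),z\rangle\ge 0$ for $|z|$ large. Lemma~\ref{Brouwer_cor} thus produces Galerkin solutions $v_m\in W_m$, and the same estimate gives a uniform bound $\|\na v_m\|_p\le C(p,\nu,L,\|\na g\|_p,\|F\|_{p'})$.

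Finally, weak compactness yields, along a subsequence, $v_m\rightharpoonup v$ in $W_0^{1,p}(\Om;\C^N)$ and $a(\varepsilon^2+|\na v_m+\na g|^2)^{\frac{p-2}{2}}(\na v_m+\na g)\rightharpoonup\mathcal A$ in $L^{p'}(\Om;\C^{N\times n})$. The Minty identification used in Theorem~\ref{thm_ex} carries over verbatim with $\na u_m$ replaced by $\na v_m+\na g$ and $\na\psi$ replaced by $\na\psi+\na g$: Lemma~\ref{str}, Lemma~\ref{str1}, and Lemma~\ref{str2} apply to the arbitrary pair $(\na v_m+\na g,\,\na\psi+\na g)$, and the test direction $(\na v_m+\na g)-(\na\psi+\na g)=\na v_m-\na\psi$ still lies in $W_0^{1,p}(\Om;\C^N)$. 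This identifies $\mathcal A=a(\varepsilon^2+|\na v+\na g|^2)^{\frac{p-2}{2}}(\na v+\na g)$, so $u:=v+g$ is the desired weak solution. The only nontrivial obstacle is the book-keeping of the $\na g$ cross term in the coercivity step, which is of lower order in $\na v(z)$ than the principal $p$-coercive contribution and is absorbed cleanly by Young's inequality.
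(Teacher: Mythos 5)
Your proposal is correct and follows essentially the same route as the paper: reduce to the shifted operator $w\mapsto -\dv\bigl(a(\varepsilon^2+|\na w+\na g|^2)^{(p-2)/2}(\na w+\na g)\bigr)$ on $W_0^{1,p}$, verify coercivity by splitting off the $\na g$ cross term and absorbing it via Young's inequality (with the same case split $p\ge 2$ / $p<2$), apply Lemma~\ref{Brouwer_cor} to the Galerkin system, and run the Minty identification with the gradients shifted by $\na g$. The paper does exactly this, with $w$ in place of your $v$ and $u_m=w_m+g$, $v_n=g+\tilde v_n$ in the final limiting argument.
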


\begin{proof}
    It is sufficient to find a weak solution $w\in W_0^{1,p}(\Om;\C^N)$ to 
    \[
    -\dv(a(\varepsilon^2+|\na w+\na g|^2)^\frac{p-2}{2}(\na w+\na g))= -\dv F \quad\text{in}\quad\Om.
    \]
    Then $u=w+g$ is the desired weak solution. We will initially prove some estimates which are valid for all $w\in W_0^{1,p}(\Om;\C^N)$. Note that 
    \begin{align*}
        \begin{split}
            &a( \varepsilon^2+|\na w +\na g|^2 )^\frac{p-2}{2}  \langle (\na w+\na g),\na w  \rangle\\
            &= a( \varepsilon^2+|\na w +\na g|^2 )^\frac{p-2}{2}|\na w+\na g|^2\\
            &\qquad - a( \varepsilon^2+|\na w +\na g|^2 )^\frac{p-2}{2}  \langle (\na w+\na g),\na g  \rangle
        \end{split}
    \end{align*}
    Therefore, by applying \eqref{ell},
    \begin{align*}
        \begin{split}
            & \re  a( \varepsilon^2+|\na w +\na g|^2 )^\frac{p-2}{2}  \langle (\na w+\na g),\na w  \rangle \\
            &\quad \ge \nu ( \varepsilon^2+|\na w +\na g|^2 )^\frac{p-2}{2} |\na w+\na g|^2  -L (\varepsilon^2+|\na w +\na g|^2 )^\frac{p-1}{2}  |\na g |.
        \end{split}
    \end{align*}
In order to estimate the last term we apply Young's inequality and note that for $\varepsilon, \delta \in (0,1]$:
     \begin{align*}
        \begin{split}
             L (\varepsilon^2+|\na w +\na g|^2 )^\frac{p-1}{2}  |\na g | &\leq  \delta( \varepsilon^2+|\na w +\na g|^2 )^\frac{p}{2}  + c_\delta |\na g |^p 
             \\
            &\leq  c_p \delta |\na w +\na g|^p + c_p + C_\delta |\na g |^p.
        \end{split}
    \end{align*}
    Combining these estimates and integrating over $\Omega$ we have
 \begin{align}\label{est-integrated}
  &\re\int_{\Om} a( \varepsilon^2+|\na w +\na g|^2 )^\frac{p-2}{2}  \langle (\na w+\na g),\na w  \rangle \,dx 
  \\
 \notag & \quad \geq \nu \int_{\Om} ( \varepsilon^2+|\na w +\na g|^2 )^\frac{p-2}{2} |\na w+\na g|^2\, dx - c_p\delta \int_{\Om} |\na w +\na g|^p \, dx
  \\
  \notag &\quad \quad - C_{p,\delta} \int_\Omega |\nabla g|^p +1\, dx.
 \end{align}
    If $p\ge2$, we can simply estimate the first integral on the second row downwards by removing $\varepsilon$. Then, for sufficiently small $\delta$ we have
    \begin{align*}
        \begin{split}
            & \re\int_{\Om} a( \varepsilon^2+|\na w +\na g|^2 )^\frac{p-2}{2}  \langle (\na w+\na g),\na w  \rangle \,dx\\
            &\ge \frac{\nu}{2}\int_{\Om}  |\na w+\na g|^p\,dx-c\int_{\Om}  |\na g |^p+1\,dx
        \end{split}
    \end{align*}
    for some $c=c(p,\nu,L)$. If $p<2$, then we use the argument in the proof of Theorem~\ref{thm_ex} to have
    \begin{align*}
        \begin{split}
            &\int_{\Om}|\na w+\na g|^p\,dx\\
            &\le \left(  \int_\Om (\varepsilon^2+|\na w+\na g|^2)^\frac{p-2}{2}|\na w+\na g|^2\,dx  \right)^\frac{p}{2}  \left( 2\int_{\Om} (\varepsilon^p+|\na w+\na g|^p)\,dx  \right)^\frac{2-p}{2}.
        \end{split}
    \end{align*}
   Applying Young's inequality to the above inequality, we have
   \[
   \int_{\Om}|\na w+\na g|^p\,dx\le c \left(\int_\Om (\varepsilon^2+|\na w+\na g|^2)^\frac{p-2}{2}|\na w+\na g|^2  +1\,dx \right)
   \]
  for some $c=c(p)>0$. Combining this with \eqref{est-integrated} and taking $\delta>0$ sufficiently small we have
  \begin{align*}
        \begin{split}
            & \re\int_{\Om} a( \varepsilon^2+|\na w +\na g|^2 )^\frac{p-2}{2}  \langle (\na w+\na g),\na w  \rangle \,dx\\
            &\ge \frac{1}{c}\int_{\Om}  |\na w+\na g|^p\,dx-c\int_{\Om}  |\na g |^p+1\,dx,
        \end{split}
    \end{align*}
for some sufficiently large $c$. Thus, for all $p>1$ and all $w\in W_0^{1,p}(\Om;\C^N)$,
    \begin{align}\label{goblin}
            &\re\int_{\Om} a( \varepsilon^2+|\na w +\na g|^2 )^\frac{p-2}{2}  \langle (\na w+\na g),\na w  \rangle \,dx-\re \int_\Om \langle F, \na w\rangle\,dx
            \\
            \notag &\quad \ge \frac{1}{c}\int_{\Om}  |\na w+\na g|^p\,dx-c\int_{\Om}  |\na g |^p+1\,dx -\int_{\Om} |F||\na w|\,dx
            \\
            \notag &\quad \ge\frac{1}{c}\int_{\Om}  |\na w|^p\,dx-c\int_{\Om}  |\na g |^p+|F|^p+1\,dx,
    \end{align}
    where we used the triangle inequality and Young's inequality to get the last inequality. Consider again a dense subset $\{\varphi_j\}_{j\in \N}$ of $ W_0^{1,p}(\Om;\C^N)$ and define the map $G: \C^n\to \C^n$ by 
    \begin{align*}
     G_j(z) &:= \int_{\Om} a( \varepsilon^2+|\na w(z) +\na g|^2 )^\frac{p-2}{2}  \langle (\na w(z) + \na g),\na \varphi_j  \rangle - \langle F, \na \varphi_j \rangle\,dx,
     \\
     w(z) &:= \sum^n_{k=1}z_k \varphi_k.
    \end{align*}
Then \eqref{goblin} shows that $G$ satisfies the assumptions of Lemma \ref{Brouwer_cor} and thus for each $m\in\mathbb{N}$, there exists $w_m\in W_m$ such that
     \begin{align}\label{eq-approximate}
    -\dv(a(\varepsilon^2+|\na w_m+\na g|^2)^\frac{p-2}{2}(\na w_m+\na g))= -\dv F \quad\text{in}\quad W_m'.
    \end{align}
    Moreover, the uniform boundedness of $\na w_m$ in $L^{p}(\Om)$ also follows from \eqref{goblin} since the first row of the estimate vanishes when we take $w=w_n$, due to \eqref{eq-approximate}. Therefore, there is a $w\in W_0^{1,p}(\Om;\C^{N })$ and a subsequence still labelled $(w_n)$ such that $w_n$ converges strongly to $w$ and  $\na w_m$ converges weakly to $\na w$. 
    
    To show that $u= g + w$ is indeed a solution to the original equation we proceed similarly as in the last part of the proof of Theorem \ref{thm_ex}.  We denote $u_m=w_m+g$ and let $\psi\in W^{1,p}(\Omega;\C^N)$ be arbitrary. Take a sequence $\tilde v_n$ converging strongly in $W^{1,p}_0(\Omega;\C^N)$ to $w$ and let $v_n := g + \tilde v_n$. With these modifications the argument in the proof of Theorem~\ref{thm_ex} works also in the current case. We omit the details.
\end{proof}

\section{Gradient H\"older continuity}\label{sec:holdcont}
In this section, we consider the regularity properties of solutions to \eqref{eq} when the coefficient function and the source term is H\"older continuous. Suppose that 
\begin{align}\label{Holder_data}
    a\in \mathcal{E}(\Om)\cap C^{\alpha/p'}(\Om;\C),\qquad F\in C^{\alpha/p'}(\Om;\C^{N\times n})
\end{align}
for some $\alpha\in(0,1)$. The main theorem of this section is as follows.

\begin{theorem}\label{schauder}
    Suppose \eqref{Holder_data} holds. Then the gradient of solutions to \eqref{eq} is locally H\"older-continuous. Moreover, there exist $r_0$, $c>0$, $\beta\in(0,\alpha)$ depending on $n,N,p,\nu,L,\alpha,[a]_{\alpha/p'},[F]_{\alpha/p'}$ such that
    \[
    \sup_{x,y\in B_{\rho}(x_0)}|\na u(x)-\na u(y)|\le c \left(\frac{\rho}{r}\right)^\beta\left( \fint_{B_{3r}(x_0)} (|\na u|^p+1)\,dx \right)^\frac{1}{p}
    \]
    for any $0<\rho <r < r_0$ with $B_{3r_0}(x_0)\subset \Om$.
\end{theorem}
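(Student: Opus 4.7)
The plan is to run the classical Calderón–Zygmund/Campanato perturbation scheme: freeze the coefficients at a point, compare the solution $u$ with a reference solution $v$ of the frozen problem, and exploit known gradient Hölder regularity for $v$ to set up an excess-decay iteration for $u$. Three ingredients are needed: (i) existence, uniqueness and a priori Hölder regularity for the reference solution; (ii) an energy/comparison estimate measuring $\na u - \na v$ in terms of the oscillation of $a$ and $F$; (iii) an iteration turning these into a Campanato-type bound on $\na u$.

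\textbf{Step 1 (Reference problem).} Fix $x_0\in\Om$ and $r>0$ with $B_r(x_0)\Subset \Om$. Let $v\in u+W_0^{1,p}(B_r(x_0);\C^N)$ be the unique solution (Corollary~\ref{thm_ex_non_bd} and Theorem~\ref{uniqueness}) of
\[
-\dv\bigl(a(x_0)(\varepsilon^2+|\na v|^2)^{\tfrac{p-2}{2}}\na v\bigr)=0\quad\text{in}\quad B_r(x_0).
\]
Since $a(x_0)\neq 0$, testing against complex test functions shows that this is equivalent to $v$ solving the pure model equation~\eqref{eq:model}. By the discussion preceding Section~\ref{sec:holdcont}, the real pair $w=(v\rr,v\ii)$ then solves a real $2N$-valued $p$-Laplace system in $B_r(x_0)$, and Uhlenbeck's theorem gives $\na v\in C^{0,\beta_0}_{\loc}$ together with the excess decay
\[
\int_{B_\rho(x_0)}|\na v-(\na v)_{B_\rho}|^p\,dx\le c\,\Bigl(\frac{\rho}{r}\Bigr)^{n+p\beta_0}\int_{B_r(x_0)}|\na v|^p\,dx,\qquad 0<\rho\le r/2,
\]
for some $\beta_0\in(0,1)$ depending only on $n,N,p$.

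\textbf{Step 2 (Comparison).} Subtracting the weak formulations of $u$ and $v$ and testing with $\varphi=u-v\in W_0^{1,p}(B_r(x_0);\C^N)$ yields, after inserting $\pm a(x_0)A(\na u)$ with $A(\xi):=(\varepsilon^2+|\xi|^2)^{(p-2)/2}\xi$ and using that the constant vector $F(x_0)$ contributes zero,
\[
\int_{B_r(x_0)} a(x_0)\,\langle A(\na u)-A(\na v),\na u-\na v\rangle\,dx=\int_{B_r(x_0)}\!\!\bigl[\langle F-F(x_0),\na u-\na v\rangle-(a-a(x_0))\langle A(\na u),\na u-\na v\rangle\bigr]dx.
\]
Taking real parts and invoking Lemma~\ref{str}, Lemma~\ref{str1}, Lemma~\ref{str2} together with the structural condition~\eqref{ell2} exactly as in the proof of Theorem~\ref{uniqueness}, the left-hand side dominates $\nu\int(\varepsilon^2+|\na u|^2+|\na v|^2)^{(p-2)/2}|\na u-\na v|^2\,dx$, which by Lemma~\ref{str3} controls $\int|V(\na u)-V(\na v)|^2\,dx$ with $V(\xi)=(\varepsilon^2+|\xi|^2)^{(p-2)/4}\xi$. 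The right-hand side is handled by Hölder's inequality using the pointwise bounds $|a(x)-a(x_0)|\le [a]_{\alpha/p'}r^{\alpha/p'}$, $|F(x)-F(x_0)|\le [F]_{\alpha/p'}r^{\alpha/p'}$ and $|A(\na u)|\le c(\varepsilon^2+|\na u|^2)^{(p-1)/2}$, followed by Young's inequality to absorb a fraction of $\int|\na u-\na v|^p$ back into the left side. The outcome is a comparison bound of the form
\[
\int_{B_r(x_0)}|V(\na u)-V(\na v)|^2\,dx\le c\,r^{\alpha}\int_{B_{3r}(x_0)}(|\na u|^p+1)\,dx,
\]
in the super-quadratic case $p\ge2$, with a modification involving the $V$-functional squared for $p<2$ that follows the same scheme as in the proof of Theorem~\ref{thm_ex}.

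\textbf{Step 3 (Campanato iteration and conclusion).} Combining the excess decay of $\na v$ from Step~1 with the comparison estimate of Step~2 and triangle inequalities in the $V$-norm (again via Lemma~\ref{str3}), a standard perturbation computation produces a recursive inequality
\[
\Phi(x_0,\rho)\le c\Bigl(\frac{\rho}{r}\Bigr)^{n+p\beta_0}\Phi(x_0,r)+c\,r^{n+\gamma}\Bigl(\fint_{B_{3r}(x_0)}(|\na u|^p+1)\,dx\Bigr),
\]
where $\Phi(x_0,\rho):=\int_{B_\rho(x_0)}|\na u-(\na u)_{B_\rho(x_0)}|^p\,dx$ and $\gamma>0$ is determined by $\alpha$ and $p$. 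A standard iteration lemma (e.g.\ \cite[Lemma~2.1]{MR1230384}) then gives $\Phi(x_0,\rho)\le c(\rho/r)^{n+p\beta}\bigl(\Phi(x_0,r)+r^n\fint_{B_{3r}}(|\na u|^p+1)\bigr)$ for any $\beta<\min(\beta_0,\gamma/p)$, which via Campanato's characterisation of Hölder spaces is equivalent to the pointwise estimate in the statement.

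\textbf{Main obstacle.} The delicate point is the comparison estimate of Step~2 in the sub-quadratic case $p<2$: the coercivity one gets is in the degenerate weight $(\varepsilon^2+|\na u|^2+|\na v|^2)^{(p-2)/2}$, which is small exactly where $|\na u|$ is large, so controlling the right-hand error $\int(a-a(x_0))\langle A(\na u),\na u-\na v\rangle$ requires inserting the weight artificially before applying Hölder and Young and unwinding via Lemma~\ref{str3}. Getting the exponent on $r$ to come out correctly (so that $\gamma>0$) is precisely what forces the Hölder exponent $\alpha/p'$ rather than $\alpha$ in the assumption~\eqref{Holder_data}, and arranging the Young absorption so that the constants depend only on the quantities listed in the theorem is where most of the bookkeeping will go.
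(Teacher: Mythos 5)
Your Steps 1 and 2 run parallel to the paper's construction (the paper compares $u$ to the solution $v$ of the pure model equation $-\dv((\varepsilon^2+|\na v|^2)^{(p-2)/2}\na v)=0$ on balls, which coincides with your frozen-coefficient solution since $a(x_0)$ is a nonzero constant; the comparison estimate is the paper's Lemma~\ref{comparison}, with exponent $\alpha_0=\min\{\alpha,\alpha(p-1)\}$ rather than a flat $\alpha$ for $p<2$). The genuine problem is in Step~3.

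You claim the recursion
\[
\Phi(x_0,\rho)\le c\Bigl(\frac{\rho}{r}\Bigr)^{n+p\beta_0}\Phi(x_0,r)+c\,r^{n+\gamma}\fint_{B_{3r}(x_0)}(|\na u|^p+1)\,dx,
\]
but the Uhlenbeck excess decay you invoke in Step~1 has $\int_{B_r}|\na v|^p\,dx$ on the right, not the excess $\int_{B_r}|\na v-(\na v)_{B_r}|^p\,dx$. Unlike the linear case, the $p$-Laplace operator (and its regularization with $\varepsilon>0$) is not invariant under subtraction of affine maps, so passing from the $L^p$-norm bound to an excess-to-excess bound is not a triviality and requires a separate argument (a degenerate/non-degenerate dichotomy or a Morrey growth estimate). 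Without it, the natural outcome of Steps~1--2 is
\[
\Phi(x_0,\rho)\le c\Bigl[\Bigl(\frac{\rho}{r}\Bigr)^{n+p\beta_0}+r^{\alpha_0}\Bigr]\int_{B_r}(|\na u|^p+1)\,dx,
\]
which cannot be fed into a Campanato iteration lemma because the right-hand side involves the full $L^p$-mass of $\na u$ on $B_r$, not $\Phi(x_0,r)$, and that mass is itself $r$-dependent. The paper resolves precisely this by first proving a Morrey growth estimate $\int_{B_\varrho}(|\na u|^p+1)\,dx\le c(\varrho/r)^\kappa\int_{B_r}(|\na u|^p+1)\,dx$ for all $\kappa<n$ (Lemma~\ref{decay_est}, obtained from Lemma~\ref{tech}), and then closing the Campanato estimate via a balancing choice $\sigma=\varrho^{1+s}$ with $s=\alpha_0/(\gamma p+n)$. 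Your proposal needs this intermediate step; as written, the iteration does not close.
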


The proof is based on the Schauder estimate of the real valued PDEs. For this, the gradient H\"older regularity in the case of a vanishing right-hand side and $a\equiv 1$ is required. Suppose therefore that  $v\in W^{1,p}(\Omega; \C^{N})$ solves
\begin{align}\label{eq:model_case2}
-\dv\big( (\varepsilon^2+|\na v|^2)^\frac{p-2}{2}\na v \big)= 0,
\end{align}
in some domain $\Omega$. Decomposing $v$ into real and imaginary parts as
\[
v=v^R+iv^I,
\]
where $v^R,v^I\in W^{1,p}(\Omega;\mathbb{R}^N)$, we define
\[
w := (v^R, v^I)\in W^{1,p}(\Omega;\mathbb{R}^{2N}).
\]
Then $w$ is a real valued weak solution to
\begin{align}\label{eq:real_val-again}
-\dv \big((\varepsilon^2+|\na w|_{\mathbb{R}^{2N\times n}}^2)^\frac{p-2}{2}\na w\big) = 0,
\end{align}
in $\Omega$. 
To see this, note that for any $\varphi\in W_0^{1,p}(\Omega;\mathbb{R}^{2N})$, we can define a test function $\phi\in W_0^{1,p}(\Omega;\C^N)$ by
\[
\phi= (\varphi_1,...,\varphi_{N})+i(\varphi_{N+1},...,\varphi_{2N}).
\]
The weak formulation corresponding \eqref{eq:model_case2} to states that
\begin{align*}
	\begin{split}
		\int_{\Om} (\varepsilon^2+|\na v|^2)^\frac{p-2}{2} \langle\na v, \na \phi\rangle\,dx=0,
	\end{split}
\end{align*}
and the real part of this equation is equal to
\[
\int_{\Om}  (\varepsilon^2+|\na w|_{\mathbb{R}^{2N\times n}}^2)^\frac{p-2}{2}\na w\cdot \na \varphi\,dx= 0,
\]
which confirms \eqref{eq:real_val-again}.
Therefore, we can apply the regularity results in \cite{uhlenbeck1977regularity} to $w$ and conclude the following.

\begin{theorem}\label{limit_grad_holder}
    Suppose $v\in W^{1,p}(B_{3\varrho}(x_0),\C^N)$ is a solution to
\begin{align}\label{hom}
    -\dv (\varepsilon^2+|\na v|^2)^\frac{p-2}{2}\na v=0\quad\text{in}\quad B_{3\varrho}(x_0).
\end{align}
Then $\na v$ is locally H\"older-continuous. Moreover, there exist $c>0$ and $\gamma\in(0,1)$ depending on $n,N,p$ such that
\[
\sup_{x\in B_{2\varrho}(x_0)}|\na v(x)|\le c\left(\fint_{B_{3\varrho}(x_0)}  |\na v|^p\,dx +1\right)^\frac{1}{p}
\]
and for any $s\in(0,2\varrho)$,
\[
\sup_{x,y\in B_{s}(x_0)}|\na v(x)-\na v(y)|\le c\left( \frac{s}{2\varrho} \right)^\gamma\left( \fint_{B_{3\varrho}(x_0)}|\na v|^p\,dx+1 \right)^\frac{1}{p}.
\]

\end{theorem}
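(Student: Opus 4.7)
The plan is to reduce the complex-valued problem to a real-valued $p$-Laplace system on $\mathbb{R}^{2N}$ and then invoke Uhlenbeck's classical regularity result, exactly along the lines of the reduction already spelled out in the excerpt leading up to \eqref{eq:real_val-again}.

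First, given $v\in W^{1,p}(B_{3\varrho}(x_0);\C^N)$ solving \eqref{hom}, decompose $v = v^R + i v^I$ with $v^R, v^I \in W^{1,p}(B_{3\varrho}(x_0);\mathbb{R}^N)$ and set $w = (v^R,v^I) \in W^{1,p}(B_{3\varrho}(x_0);\mathbb{R}^{2N})$. By \eqref{equiv_norms} we have the pointwise identity $|\na w|_{\mathbb{R}^{2N\times n}} = |\na v|_{\C^{N\times n}}$. Next, for any test function $\varphi = (\varphi_1,\dots,\varphi_{2N}) \in W_0^{1,p}(B_{3\varrho}(x_0);\mathbb{R}^{2N})$, form $\phi = (\varphi_1,\dots,\varphi_N) + i(\varphi_{N+1},\dots,\varphi_{2N}) \in W_0^{1,p}(B_{3\varrho}(x_0);\C^N)$. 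Plugging $\phi$ into the weak formulation of \eqref{hom} and taking real parts, using Lemma~\ref{str} (with $a\equiv 1$, $G = 0$ and recalling $\widehat{\nabla v} = \nabla w$), one obtains
\[
\int_{B_{3\varrho}(x_0)} (\varepsilon^2+|\na w|^2)^\frac{p-2}{2}\na w \cdot \na \varphi \, dx = 0,
\]
so $w$ is a real-valued weak solution to the standard (degenerate or nondegenerate) $p$-Laplace system on $\mathbb{R}^{2N}$ in the ball $B_{3\varrho}(x_0)$.

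Now apply the regularity results of Uhlenbeck~\cite{uhlenbeck1977regularity}: there exist $c = c(n,N,p)$ and $\gamma = \gamma(n,N,p) \in (0,1)$ such that $\na w$ is locally Hölder continuous on $B_{3\varrho}(x_0)$ with
\[
\sup_{x\in B_{2\varrho}(x_0)} |\na w(x)| \le c \left( \fint_{B_{3\varrho}(x_0)} |\na w|^p \, dx + 1 \right)^\frac{1}{p}
\]
and for any $s\in (0,2\varrho)$,
\[
\sup_{x,y\in B_s(x_0)} |\na w(x)-\na w(y)| \le c\left(\frac{s}{2\varrho}\right)^\gamma \left(\fint_{B_{3\varrho}(x_0)} |\na w|^p \, dx + 1\right)^\frac{1}{p}.
\]
These quantitative estimates in the form we need are indeed the content of Uhlenbeck's theorem applied on the scaled ball; a standard translation/scaling together with the a priori interior Lipschitz bound and Morrey-type iteration produce precisely the decay estimates stated above.

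Finally, translate back to $v$: since $\na w = (\na v^R, \na v^I)$ and $|\na v(x) - \na v(y)|_{\C^{N\times n}} = |\na w(x) - \na w(y)|_{\mathbb{R}^{2N\times n}}$ by \eqref{equiv_norms}, the two displayed inequalities for $w$ immediately give the corresponding inequalities for $v$. The only nonroutine verification in the argument is the correspondence of test functions and the identification that $w$ satisfies the real-valued equation; once that is in hand, everything reduces to quoting~\cite{uhlenbeck1977regularity}. No obstacle of substance is expected beyond carefully checking that $\re \langle \na v, \na \phi \rangle = \na w \cdot \na \varphi$, which follows directly from Definition~\ref{def_complex_inner}.
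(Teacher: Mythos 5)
Your proposal matches the paper's approach exactly: decompose $v$ into real and imaginary parts, verify that $w=(v^R,v^I)$ is a real-valued weak solution to the $p$-Laplace system in $\mathbb{R}^{2N}$, and quote Uhlenbeck's regularity theory together with the norm equivalence \eqref{equiv_norms}. The only small slip is citing Lemma~\ref{str} for the real-part computation (that lemma concerns the difference of two gradient terms, not a single gradient paired against an arbitrary test function); the identity you actually need, $\re\langle F,G\rangle=\widehat F\cdot\widehat G$, is stated right after Definition~\ref{def_complex_inner}, and you correctly point to it at the end.
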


Hereafter, we start to prove lemmas for the Schauder estimate.
Without loss of generality, we may assume $r\in(0,1)$. For each $\varrho$ such that $\bar B_\varrho(x_0)\subset \Omega$, let 
\begin{align*}
 v=v(\varrho)\in u+W_0^{1,p}(B_{\varrho}(x_0);\C^N )
\end{align*}
be the weak solution to \eqref{hom} in $B_{\varrho}(x_0)$. We remark that a unique solution exists due to Corollary \ref{thm_ex_non_bd}. We begin with two standard energy estimates.
\begin{lemma}\label{limit_en}
There exists $c=c(p)$ such that for every ball $\bar B_\varrho(x_0) \subset \Omega$ and $v=v(\varrho)$ we have
\[
\fint_{B_{\varrho}(x_0)}|\na v|^p\,dx\le c\fint_{B_{\varrho}(x_0)} (|\na u|^p+1)\,dx.
\]
\end{lemma}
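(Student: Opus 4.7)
The plan is to test the equation \eqref{hom} for $v$ against the admissible test function $\varphi = v - u$, which belongs to $W_0^{1,p}(B_\varrho(x_0);\C^N)$ by the definition of $v = v(\varrho)$ (and whose existence is guaranteed by Corollary~\ref{thm_ex_non_bd} with $a\equiv 1$ and $F\equiv 0$). This yields
\[
\int_{B_\varrho(x_0)} (\varepsilon^2 + |\na v|^2)^{(p-2)/2} \langle \na v, \na v\rangle\,dx = \int_{B_\varrho(x_0)} (\varepsilon^2 + |\na v|^2)^{(p-2)/2} \langle \na v, \na u\rangle\,dx.
\]
Since $\langle \na v, \na v\rangle = |\na v|^2$ is real, taking real parts and using $|\langle \na v, \na u\rangle| \le |\na v||\na u|$ gives the basic energy inequality
\[
\int_{B_\varrho(x_0)} (\varepsilon^2 + |\na v|^2)^{(p-2)/2} |\na v|^2\,dx \le \int_{B_\varrho(x_0)} (\varepsilon^2 + |\na v|^2)^{(p-1)/2} |\na u|\,dx,
\]
and Young's inequality upgrades the right-hand side to $\delta \int (\varepsilon^2 + |\na v|^2)^{p/2}\,dx + c_\delta \int |\na u|^p\,dx$ for any $\delta > 0$.

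From this point the analysis splits according to the range of $p$. When $p \ge 2$, the pointwise lower bound $(\varepsilon^2 + |\na v|^2)^{(p-2)/2}|\na v|^2 \ge |\na v|^p$ is immediate (the first factor is $\ge |\na v|^{p-2}$), and $(\varepsilon^2 + |\na v|^2)^{p/2} \le c_p(\varepsilon^p + |\na v|^p)$ together with $\varepsilon \le 1$ allows me to choose $\delta$ small enough to absorb the resulting $\int |\na v|^p$ term into the left-hand side. After normalizing to averages this gives the claimed bound, with the constant $1$ on the right absorbing the $\varepsilon^p$ contribution.

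The case $p < 2$ is where the main technical difficulty lies, since $(\varepsilon^2 + |\na v|^2)^{(p-2)/2}|\na v|^2$ no longer dominates $|\na v|^p$ pointwise. To handle it I would invoke the H\"older interpolation trick from the proof of Theorem~\ref{thm_ex}: starting from the pointwise identity
\[
|\na v|^p = \bigl[(\varepsilon^2 + |\na v|^2)^{(p-2)/2}|\na v|^2\bigr]^{p/2}\cdot (\varepsilon^2+|\na v|^2)^{p(2-p)/4},
\]
and applying H\"older with conjugate exponents $2/p$ and $2/(2-p)$ controls $\int |\na v|^p$ by a product of the energy and $\int (\varepsilon^2+|\na v|^2)^{p/2}\,dx$. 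Splitting into the subcases $\int_{B_\varrho(x_0)} \varepsilon^p\,dx \le \int_{B_\varrho(x_0)} |\na v|^p\,dx$ (where the second factor is controlled by the $L^p$-norm of $\na v$ itself) and its complement (where $\int |\na v|^p \le c|B_\varrho(x_0)|$ is trivial via $\varepsilon \le 1$), and feeding in the basic energy inequality, I can once again absorb $\int|\na v|^p$ after choosing $\delta$ sufficiently small. The "$+1$" on the right-hand side of the claim emerges precisely from this case split, which keeps track of the $\varepsilon$-dependent terms.
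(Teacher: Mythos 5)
Your proposal is correct and follows essentially the same route as the paper's proof: test \eqref{hom} against $v-u$, take real parts to obtain the basic energy inequality, and then split into the cases $p\geq 2$ (absorbing via Young after the pointwise lower bound) and $p<2$ (Hölder interpolation, splitting off the trivial subcase to control the $\varepsilon$-contribution, which produces the $+1$). The only differences are cosmetic — the exact form of the Young-inequality applications and whether the trivial subcase is stated as $\fint|\nabla v|^p\leq 1$ versus $\int\varepsilon^p\geq\int|\nabla v|^p$.
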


\begin{proof}
    We take $v-u$ as a test function to \eqref{hom}.
    Then, we get
\begin{align}\label{p_le_2_en}
0 \ge\fint_{B_{\varrho}(x_0)}(\varepsilon^2+|\na v|^2)^\frac{p-2}{2}(|\na v|^2-|\na v||\na u| ) \,dx.
\end{align}
We divide the proof into cases.

\noindent \textit{Case $p\ge2$.}
Recalling that $\varepsilon\leq 1$, we can use \eqref{p_le_2_en} and Young's inequality to obtain
\begin{align*}
\fint_{B_{\varrho}(x_0)}|\na v|^p\,dx &\le \fint_{B_{\varrho}(x_0)} (\varepsilon^2+|\na v|^2)^\frac{p-2}{2}|\na v|^2\,dx
\\
&\le \fint_{B_{\varrho}(x_0)} (\varepsilon^2+|\na v|^2)^\frac{p-2}{2}|\na v||\na u|\,dx
\\
&\leq c_p \fint_{B_{\varrho}(x_0)}|\na v||\na u| + |\na v|^{p-1} |\na u| \,dx
\\
&\leq \tfrac14 \fint_{B_{\varrho}(x_0)} |\nabla v|^{p'} + |\nabla v|^p \,dx + c \fint_{B_{\varrho}(x_0)} |\na u|^p \, dx.
\end{align*}
Finally, noting that $p\geq p'$ in this case, we can apply Young's inequality again on the first term in the integrand, ending up with
\[
\fint_{B_{\varrho}(x_0)}|\na v|^p\,dx\le c\fint_{B_{\varrho}(x_0)} (|\na u|^p+1)\,dx,
\]
for some $c=c(p)$.

\noindent \textit{Case $p<2$.} Since we have
\begin{align*}
        \begin{split}
            \fint_{B_{\varrho}(x_0)} |\na v|^p\,dx
        &\le \left( \fint_{B_{\varrho}(x_0)} (\varepsilon^2+|\na v|^2)^\frac{p-2}{2}|\na v|^2\,dx \right)^\frac{p}{2}\\
        &\qquad\times\left( \fint_{B_{\varrho}(x_0)}(\varepsilon^2+|\na v|^2)^\frac{p}{2}\,dx \right)^\frac{2-p}{2},
        \end{split}
\end{align*}
and since we may assume that 
\[
1\le \fint_{B_{\varrho}(x_0)}|\na v|^p\,dx,
\]
Young's inequality leads to
\begin{align*}
    \fint_{B_{\varrho}(x_0)} |\na v|^p\,dx\le c \fint_{B_{\varrho}(x_0)} (\varepsilon^2+|\na v|^2)^\frac{p-2}{2}|\na v|^2\,dx,
\end{align*}
for some $c=c(p)$. Combining this estimate with \eqref{p_le_2_en} we have 
\[
\fint_{B_{\varrho}(x_0)} |\na v|^p\,dx\le c\fint_{B_{\varrho}(x_0)} (\varepsilon^2+|\na v|^2)^\frac{p-1}{2}|\na u|\,dx.
\]
Finally, another application of Young's inequality confirms that
\[
\fint_{B_{\varrho}(x_0)}|\na v|^p\,dx\le c\fint_{B_{\varrho}(x_0)}(|\na u|^p+1)\,dx.
\]
\end{proof}

\begin{lemma}\label{comparison}
For each $\varrho\in(0,1]$ such that $\bar B_{\varrho}(x_o) \subset \Omega$ and $v=v(\varrho)$, there exists $c=c(p,\nu,[a]_{\alpha/p'},[F]_{\alpha/p'})$ such that
    \[
\fint_{B_{\varrho}(x_0)} |\na u-\na v|^{p}\,dx\le c\varrho^{\alpha_0}\fint_{B_{\varrho}(x_0)} (|\na u|^p+1)\,dx,
\]
where
\[
\alpha_0=\min \left\{\alpha, \alpha(p-1) \right\}.
\]
\end{lemma}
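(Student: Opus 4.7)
The plan is to combine the monotonicity exploited in the proof of Theorem~\ref{uniqueness} with the H\"older continuity of the data, by freezing coefficients at an interior point. Fix $x_0$ with $\bar B_\varrho(x_0)\subset\Om$ and set $a_0:=a(x_0)$ and $F_0:=F(x_0)$. Since $a_0$ is a non-zero complex constant, $v=v(\varrho)$ is equivalently a weak solution of
\[
-\dv\!\big(a_0(\varepsilon^2+|\na v|^2)^{\frac{p-2}{2}}\na v\big)=0\quad\text{in }B_\varrho(x_0),
\]
while the equation for $u$ can be rewritten, modulo the constant $F_0$ which is killed by $W^{1,p}_0$ test functions, as
\[
-\dv\!\big(a_0(\varepsilon^2+|\na u|^2)^{\frac{p-2}{2}}\na u\big)=-\dv\!\big[(a_0-a)(\varepsilon^2+|\na u|^2)^{\frac{p-2}{2}}\na u+(F-F_0)\big].
\]

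Subtracting these two identities, testing against $\varphi=u-v\in W^{1,p}_0(B_\varrho(x_0);\C^N)$ and taking real parts, I would apply Lemma~\ref{str}, Lemma~\ref{str1}, Lemma~\ref{str2} and the structural hypothesis \eqref{ell2}, exactly as in the proof of Theorem~\ref{uniqueness}, to obtain the coercive bound
\[
\nu\!\int_{B_\varrho(x_0)}\!(\varepsilon^2+|\na u|^2+|\na v|^2)^{\frac{p-2}{2}}|\na u-\na v|^2\,dx \le |\mathrm{RHS}|.
\]
For the right-hand side, the H\"older hypothesis \eqref{Holder_data} yields $|a_0-a(x)|+|F_0-F(x)|\le c\,\varrho^{\alpha/p'}$ on $B_\varrho(x_0)$, and the elementary bound $(\varepsilon^2+|\na u|^2)^{(p-2)/2}|\na u|\le c(1+|\na u|^{p-1})$ combined with H\"older's inequality with exponents $p'$ and $p$ produces
\[
|\mathrm{RHS}|\le c\,\varrho^{\alpha/p'}\Bigl(\int_{B_\varrho(x_0)}\!(|\na u|^p+1)\,dx\Bigr)^{\!1/p'}\Bigl(\int_{B_\varrho(x_0)}\!|\na u-\na v|^p\,dx\Bigr)^{\!1/p}.
\]

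To conclude I would split into the regimes $p\ge 2$ and $p<2$. When $p\ge 2$, the pointwise inequality $(\varepsilon^2+|\na u|^2+|\na v|^2)^{(p-2)/2}|\na u-\na v|^2\ge c\,|\na u-\na v|^p$ lifts the coercive term to $\int|\na u-\na v|^p$; absorbing one factor $(\int|\na u-\na v|^p)^{1/p}$ on the right and raising to the power $p'$ produces the claim with $\alpha_0=\alpha$. For $p<2$ this pointwise inequality fails, and I would instead mimic the subquadratic H\"older trick already used in the proof of Theorem~\ref{thm_ex}, namely
\[
\int|\na u-\na v|^p \le \Bigl(\int(\varepsilon^2+|\na u|^2+|\na v|^2)^{\frac{p-2}{2}}|\na u-\na v|^2\Bigr)^{\!p/2}\Bigl(\int(\varepsilon^2+|\na u|^2+|\na v|^2)^{\frac{p}{2}}\Bigr)^{\!\frac{2-p}{2}},
\]
controlling the last factor by $\int(|\na u|^p+1)\,dx$ thanks to Lemma~\ref{limit_en}. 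Since the exponents satisfy $p/p'+2-p=1$ and $\alpha p/p'=\alpha(p-1)$, this delivers the statement with $\alpha_0=\alpha(p-1)$, and the two cases assemble into $\alpha_0=\min\{\alpha,\alpha(p-1)\}$ after dividing both sides by $|B_\varrho|$.

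The main obstacle I anticipate is the exponent bookkeeping in the subquadratic case: the monotonicity step only delivers an integral weighted by $(\varepsilon^2+|\na u|^2+|\na v|^2)^{(p-2)/2}$, which is not directly comparable to $\int|\na u-\na v|^p$, so the interpolation via Lemma~\ref{limit_en} has to be combined with the absorption of the factor $(\int|\na u-\na v|^p)^{1/p}$ in just the right way to yield exactly the power $\varrho^{\alpha(p-1)}$ multiplying a single copy of $\int(|\na u|^p+1)\,dx$.
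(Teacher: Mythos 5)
Your proposal is correct and follows essentially the same route as the paper: freeze the coefficient and source at $x_0$, test with $u-v$, take real parts, invoke Lemma~\ref{str}, Lemma~\ref{str1}, Lemma~\ref{str2} with \eqref{ell2} for coercivity, and then split into $p\ge 2$ and $p<2$, using in the latter case the interpolation trick together with Lemma~\ref{limit_en} to control the extra factor. The only cosmetic difference is that for $p\ge 2$ you absorb a factor of $(\int|\na u-\na v|^p)^{1/p}$ and raise to $p'$, whereas the paper applies Young's inequality directly; these are equivalent and lead to the same $\alpha_0=\min\{\alpha,\alpha(p-1)\}$.
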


\begin{proof}
    We take $u-v$ as a test function to 
    \begin{align*}
        \begin{split}
            &-\dv \big( a_0 \big[ (\varepsilon^2+|\na u|^2)^\frac{p-2}{2}\na u - (\varepsilon^2+|\na v|^2)^\frac{p-2}{2}\na v  \big] \big)\\
            &=\dv \big((a-a_0 )(\varepsilon^2+|\na u|^2)^\frac{p-2}{2}\na u  \big)-\dv(F-F_0)
        \end{split}
    \end{align*}
    in $B\equiv B_{\varrho}(x_0)$ where $a_0=a(x_0)$ and $F_0=F(x_0)$. Taking the real part of the resulting expression and using Lemma \ref{str}, Lemma \ref{str1} and Lemma \ref{str2} combined with the condition \eqref{ell2} as in the proof of Theorem \ref{uniqueness}, we end up with
   \begin{align*}
       \begin{split}
           &\fint_{B} (\varepsilon^2+|\na u|^2+|\na v|^2)^\frac{p-2}{2}|\na u-\na v|^2\,dx\\
           &\le c \fint_{B}|a_0-a|(\varepsilon^2+|\na u|^2)^\frac{p-2}{2}|\na u||\na u-\na v|\,dx\\
           &\qquad+c\fint_{B}|F-F_0||\na u-\na v|\,dx.
       \end{split}
   \end{align*}
Using the H\"older continuity conditions \eqref{Holder_data} we can estimate the right hand side upwards to obtain
\begin{align}
 \begin{split}
 \label{wlnt}  &\fint_{B} (\varepsilon^2+|\na u|^2+|\na v|^2)^\frac{p-2}{2}|\na u-\na v|^2\,dx 
   \\
   &\leq c \fint_B \varrho^{\alpha/p'} \big[(\varepsilon^2 +|\nabla u|^2)^\frac{p-1}{2} + 1\big]|\nabla u - \nabla v|\,dx.
 \end{split}
\end{align}
We divide the rest of the proof into cases.

\noindent \textit{Case $p\ge2$.} In this case, we observe that
\[
|\na u - \na v|^p = |\na u-\na v|^{p-2} |\na u - \na v|^2 \le 2^\frac{p-2}{2}( |\na u|^2+|\na v|^2  )^\frac{p-2}{2}|\na u - \na v|^2.
\]
Taking the integral average over $B$ and combining the resulting estimate with \eqref{wlnt} we end up with
\begin{align*}
        \fint_{B} |\na u-\na v|^{p}\,dx
        \le c \fint_B \varrho^{\alpha/p'} \big[(1 +|\nabla u|^2)^\frac{p-1}{2} + 1\big]|\nabla u - \nabla v|\,dx.
\end{align*}
To estimate further on the right hand side, we apply Young's inequality to have
\[
\fint_{B} |\na u-\na v|^{p}\,dx\le c\varrho^\alpha\fint_B (|\na u|^p+1)\,dx.
\]
Thus the conclusion holds in this case.

\noindent \textit{Case $p<2$.} In this case $2/p > 1$, and we can use H\"older's inequality to obtain
\begin{align*}
    \begin{split}
        &\fint_B |\na u-\na v|^p\,dx\\
        &= \fint_B (\varepsilon^2+|\na u|^2+|\na v|^2)^\frac{(p-2)p}{4} |\na u-\na v|^p (\varepsilon^2+|\na u|^2+|\na v|^2)^\frac{(2-p)p}{4} \,dx\\
        &\le \left( \fint_{B} (\varepsilon^2+|\na u|^2+|\na v|^2)^\frac{p-2}{2}|\na u-\na v|^2\,dx \right)^\frac{p}{2}\hspace{-1mm}\left( \fint_B (\varepsilon^2+|\na u|^2+|\na v|^2)^\frac{p}{2}\,dx \right)^\frac{2-p}{2}\hspace{-1mm}.
    \end{split}
\end{align*}
Estimating the first factor using \eqref{wlnt} and applying H\"older inequality yields
\begin{align*}
    \begin{split}
        \fint_B |\na u-\na v|^p\,dx
        &\le c\left( \varrho^\alpha\fint_{B} 1+|\na u|^p \,dx \right)^\frac{p-1}{2}\left( \fint_B |\na u-\na v|^p\,dx \right)^\frac{1}{2}\\
        &\qquad\times\left( \fint_B 1 + |\na u|^p + |\na v|^p\,dx \right)^\frac{2-p}{2}.
    \end{split}
\end{align*}
Applying Lemma~\ref{limit_en} to the last factor and solving for the $L^p$-norm of $\na u - \na v$ completes the proof. In the last step we also use that $\varrho \leq 1$, which guarantees that
\begin{align*}
 \max\{\varrho^\alpha, \varrho^{(p-1)\alpha} \} \leq \varrho^{\alpha_0}.
\end{align*}
\end{proof}

\begin{lemma}\label{decay_est}
    Suppose $\kappa\in(0,n)$.
    There exists $r_0$ and $c$ depending on $n,N,p,\nu,$ $[a]_{\alpha/p'}, [F]_{\alpha/p'},\kappa$ such that if $\bar B(x_0,r_0)\subset \Omega$ then
    for any $0<\varrho<r<r_0$,
    \[
    \int_{B_{\varrho }(x_0)} (|\na u|^p+1)\,dx\le c\left(\frac{\varrho}{r}\right)^{\kappa}\int_{B_{r}(x_0)}(|\na u|^p+1)\,dx.
    \]
\end{lemma}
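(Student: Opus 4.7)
The plan is to carry out the standard comparison-plus-iteration argument for Campanato-type decay. Fix $x_0$ with $\bar B_{r_0}(x_0) \subset \Omega$ and set $\phi(s) := \int_{B_s(x_0)}(|\na u|^p+1)\,dx$, which is nondecreasing in $s$. For $r \in (0, r_0]$ let $v = v(r) \in u + W_0^{1,p}(B_r(x_0);\C^N)$ be the unique weak solution (via Corollary~\ref{thm_ex_non_bd}) of the homogeneous equation \eqref{hom} on $B_r(x_0)$. The goal is to derive an inequality of the form
\[
\phi(\varrho) \le c_0\Big[\big(\tfrac{\varrho}{r}\big)^n + r^{\alpha_0}\Big]\phi(r)
\]
for all $\varrho \le r \le r_0$, after which the lemma follows from a standard iteration argument.

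To obtain this, I would split $\na u = \na v + (\na u - \na v)$ and control each piece separately. For the homogeneous part, Theorem~\ref{limit_grad_holder} applied with parameter $r/3$ gives a sup bound for $|\na v|$ on $B_{2r/3}(x_0)$ by the $L^p$-average over $B_r(x_0)$ plus $1$; integrating this over $B_\varrho(x_0)$ for $\varrho \le 2r/3$ and using Lemma~\ref{limit_en} yields
\[
\int_{B_\varrho(x_0)} |\na v|^p\,dx \le c\Big(\tfrac{\varrho}{r}\Big)^n\int_{B_r(x_0)}(|\na u|^p+1)\,dx.
\]
For the deviation, monotonicity of the integral and Lemma~\ref{comparison} give
\[
\int_{B_\varrho(x_0)}|\na u - \na v|^p\,dx \le \int_{B_r(x_0)}|\na u - \na v|^p\,dx \le c\, r^{\alpha_0}\int_{B_r(x_0)}(|\na u|^p+1)\,dx.
\]
Adding these and absorbing the constant term $\int_{B_\varrho} 1 \le (\varrho/r)^n \int_{B_r} 1$ delivers the desired decay inequality for $\varrho \le 2r/3$. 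The remaining range $\varrho \in (2r/3, r]$ is handled trivially by monotonicity, since $(r/\varrho)^\kappa$ is then bounded by a purely dimensional constant.

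The last step is to apply the standard Campanato-type iteration lemma (e.g.\ Lemma~2.1, Ch.~III of Giaquinta): whenever a nondecreasing $\phi$ satisfies $\phi(\varrho) \le c_0[(\varrho/r)^n + \varepsilon]\phi(r)$ for all $\varrho \le r \le r_0$, and $\varepsilon = \varepsilon(r_0)$ is small enough in terms of $c_0$, $n$, $\kappa$, then $\phi(\varrho) \le c(\varrho/r)^\kappa \phi(r)$ for all $\varrho \le r \le r_0$. Since our perturbation is $\varepsilon = c r_0^{\alpha_0}$, shrinking $r_0$ depending on the data $n,N,p,\nu,[a]_{\alpha/p'},[F]_{\alpha/p'},\kappa$ makes the smallness condition valid and finishes the proof.

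The main obstacle is the first step, namely producing the sharp $(\varrho/r)^n$ decay of $\int_{B_\varrho}|\na v|^p$; this is where Theorem~\ref{limit_grad_holder} is essential, and care must be taken with the scaling (which is why we invoke it with radius $r/3$, and treat $\varrho > 2r/3$ separately). Once the two-term decay estimate for $\phi$ is in hand, the iteration lemma mechanically converts it into the claimed power-type decay.
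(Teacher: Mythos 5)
Your proposal is correct and follows essentially the same route as the paper: decompose $\na u$ via the comparison function $v=v(r)$, use the supremum estimate of Theorem~\ref{limit_grad_holder} (applied on the ball of radius $r/3$) to get $(\varrho/r)^n$-decay for the homogeneous part when $\varrho\le 2r/3$, control the deviation by Lemma~\ref{comparison}, handle the range $\varrho>2r/3$ trivially, and close with the standard Campanato iteration lemma (the paper's Lemma~\ref{tech}). The only cosmetic difference is that you invoke Lemma~\ref{limit_en} to pass from $\int_{B_r}|\na v|^p$ to $\int_{B_r}(|\na u|^p+1)$, while the paper does this via triangle inequality plus Lemma~\ref{comparison}; both are fine.
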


\begin{proof}
For simplicity, we denote
\begin{align*}
 B:= B_{r}(x_0), \quad  \tau:=\tfrac{\varrho}{r} \in (0,1), \quad  \tau B := B_{\tau r}(x_0) = B_\varrho(x_0). 
\end{align*}
Let $v=v(r)$. Then, by the triangle inequality, we have
    \[
     \fint_{\tau B} (|\na u|^p +1)\,dx  \le 2^{p-1}\fint_{\tau B} (|\na v|^p+1)\,dx  +2^{p-1}\fint_{\tau B} |\na u-\na v|^p\,dx .
    \]
    For the first term, we apply the supremum estimate in Theorem~\ref{limit_grad_holder} if $\tau\in(0,\tfrac{2}{3})$ or replace $\tau B$ in the referenced domain of the integral if $\tau\in[\tfrac{2}{3},1)$. While for the second term, we use Lemma~\ref{comparison} (which is possible if we take $r_0 \leq 1$). Then, we get
    \[
    \fint_{\tau B} (|\na u|^p+1)\,dx  \le c \fint_{ B} (|\na v|^p+1)\,dx  +cr^{\alpha_0}\tau^{-n} \fint_{B} (|\na u|^p+1)\,dx.
    \]
    where $c=c(n,N,p,\nu,[a]_{\alpha/p'},[F]_{\alpha/p'})$.
    To estimate the first term on the right-hand side we again apply the triangle inequality and Lemma~\ref{comparison} to obtain
    \[
    \fint_{\tau B} (|\na u|^p+1)\,dx  \le c \fint_{ B} (|\na u|^p+1)\,dx  +cr^{\alpha_0}\tau^{-n} \fint_{B} (|\na u|^p+1)\,dx.
    \]
    Multiplying with the measure of $\tau B$ this can equivalently be stated as
    \[
    \int_{B_\varrho(x_0)} (|\na u|^p+1)\,dx  \le c \left(\left(\frac{\varrho}{r}\right)^n+r^{\alpha_0}\right)\int_{B_r(x_0)} (|\na u|^p+1)\,dx.
    \]
    This allows us to applying Lemma~\ref{tech}, with the choice $B=0$ and $\varepsilon_0 = r_0^{\alpha_0}$, which completes the proof.
\end{proof}

The following technical lemma can be found in \cite[Lemma 3.4, Chapter 3]{han2011elliptic}.

\begin{lemma}\label{tech}
    Let $\phi(t)$ be a nonnegative and nondecreasing function on $[0,R]$. Suppose that 
    \[
    \phi(\varrho)\le A\left( \left(\frac{\varrho}{r}\right)^\alpha+\varepsilon \right) \phi(r)+Br^\beta
    \]
    for any $0<\varrho\le r\le R$ with $A,B,\alpha,\beta$ nonnegative constants and $\beta<\alpha
    $. Then for any $\gamma\in (\beta,\alpha)$, there exists a constant $\varepsilon_0=\varepsilon_0(A,\alpha,\beta,\gamma)$ such that if $\varepsilon<\varepsilon_0$, we have for all $0<\varrho\le r\le R$
    \[
    \phi(\varrho)\le c\left( \left(\frac{\varrho}{r}\right)^\gamma\phi(r) +B\varrho^\beta \right),
    \]
    where $c$ is a positive constant depending on $A,\alpha,\beta,\gamma$.
\end{lemma}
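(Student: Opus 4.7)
The plan is to prove Lemma \ref{tech} by the standard discrete iteration argument: fix a small scaling factor $\tau\in(0,1)$, iterate the hypothesis along the geometric sequence $r,\tau r,\tau^2 r,\ldots$, sum a geometric series, and finally use the monotonicity of $\phi$ to pass from dyadic-type radii to arbitrary $\varrho$.

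First, given $\gamma\in(\beta,\alpha)$, I would fix $\tau\in(0,1)$ so small that $2A\tau^{\alpha}\le \tau^{\gamma}$, which is possible because $\alpha>\gamma$. I would then define $\varepsilon_{0}=\tau^{\gamma}/(2A)$, so that whenever $\varepsilon<\varepsilon_{0}$ the hypothesis yields, by choosing radii $\tau r\le r$,
\[
\phi(\tau r)\le A\bigl(\tau^{\alpha}+\varepsilon\bigr)\phi(r)+Br^{\beta}\le \tau^{\gamma}\phi(r)+Br^{\beta}
\]
for every $r\in(0,R]$. Applying this inequality repeatedly with $r$ replaced by $\tau^{j}r$ and using an easy induction on $k$ yields
\[
\phi(\tau^{k}r)\le \tau^{k\gamma}\phi(r)+Br^{\beta}\sum_{j=0}^{k-1}\tau^{j\gamma}\tau^{(k-1-j)\beta}=\tau^{k\gamma}\phi(r)+B(\tau^{k-1}r)^{\beta}\sum_{j=0}^{k-1}\tau^{j(\gamma-\beta)}.
\]
Since $\gamma>\beta$, the geometric sum is bounded by $1/(1-\tau^{\gamma-\beta})$, a constant depending only on $A,\alpha,\beta,\gamma$, so
\[
\phi(\tau^{k}r)\le \tau^{k\gamma}\phi(r)+C\,B\,(\tau^{k}r)^{\beta},
\]
absorbing a factor $\tau^{-\beta}$ into $C$.

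Finally, for arbitrary $0<\varrho\le r\le R$ I would choose $k\in\mathbb{N}_{0}$ with $\tau^{k+1}r<\varrho\le \tau^{k}r$. Monotonicity of $\phi$ gives $\phi(\varrho)\le \phi(\tau^{k}r)$, and from $\tau^{k}r<\varrho/\tau$ one gets $\tau^{k\gamma}\le \tau^{-\gamma}(\varrho/r)^{\gamma}$ and $(\tau^{k}r)^{\beta}\le \tau^{-\beta}\varrho^{\beta}$. Combining these with the displayed estimate gives the desired conclusion
\[
\phi(\varrho)\le c\Bigl(\bigl(\tfrac{\varrho}{r}\bigr)^{\gamma}\phi(r)+B\varrho^{\beta}\Bigr),
\]
with $c=c(A,\alpha,\beta,\gamma)$. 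There is no genuine analytic obstacle here; the only mildly delicate point is the order of choices (pick $\tau$ first from $\alpha>\gamma$, then $\varepsilon_{0}$ so that the perturbation $A\varepsilon$ is absorbed into the half of $\tau^{\gamma}$ left over) so that all dependencies line up with the statement of the lemma.
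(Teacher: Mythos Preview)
Your argument is correct and is exactly the standard iteration proof; the paper itself does not give a proof but simply cites \cite[Lemma~3.4, Chapter~3]{han2011elliptic}, where precisely this argument appears. The only cosmetic point is that your definition $\varepsilon_0=\tau^{\gamma}/(2A)$ tacitly assumes $A>0$, but the case $A=0$ is trivial and can be mentioned in one line.
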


We now prove the local gradient estimate of $\na u$.
\begin{proof}[Proof of Theorem~\ref{schauder}]
     Let $\varrho\in(0,r]$ and $v=v(3\varrho/2)$. Take $\sigma \in (0, \varrho)$ which will be fixed  later. By the triangle inequality, we get
     \begin{align*}
         \begin{split}
             &\fint_{B_{\sigma}(x_0)}|\na u-(\na u)_{B_{\sigma}(x_0)}|^p\,dx\\
             &\le c\fint_{B_{\sigma}(x_0)}|\na u-(\na v)_{B_{\sigma}(x_0)}|^p\,dx\\
             &\le c\fint_{B_{\sigma}(x_0)}|\na v-(\na v)_{B_{\sigma}(x_0)}|^p\,dx+c\fint_{B_{\sigma}(x_0)}|\na u-\na v|^p\,dx
         \end{split}
     \end{align*}
     for some $c=c(p)$. We estimate the first term by applying Theorem~\ref{limit_grad_holder}, and then Lemma~\ref{limit_en} in order to exchange $\na v$ for $\na u$. In the second integral we apply Lemma~\ref{comparison} and end up with
     \begin{align*}
         \begin{split}
             &\fint_{B_{\sigma}(x_0)}|\na u-(\na u)_{B_{\sigma}(x_0)}|^p\,dx
             \\
             &\le c\left(  \frac{\sigma}{\varrho} \right)^{\gamma p}\fint_{B_{3\varrho/2}(x_0)} (|\na u|^p+1)\,dx+c\left(  \frac{\sigma}{\varrho} \right)^{-n}\varrho^{\alpha_0}\fint_{B_{3\varrho/2}(x_0)}(|\na u|^p+1)\,dx.
         \end{split}
     \end{align*}
     To estimate further, we employ Lemma~\ref{decay_est}. Then, for any $\kappa\in(0,n)$, we get
     \begin{align*}
         \begin{split}
             &\int_{B_{\sigma}(x_0)}|\na u-(\na u)_{B_{\sigma}(x_0)}|^p\,dx
             \\
             &\le c\left ( \left(  \frac{\sigma}{\varrho} \right)^{\gamma p+n}+\varrho^{\alpha_0}   \right)\left( \left(\frac{\varrho}{r}\right)^{\kappa}\int_{B_{3r/2}(x_0)}(|\na u|^p+1)\,dx \right),
         \end{split}
     \end{align*}
     where $c=c(n,N,p,\nu,[a]_{\alpha/p'},[F]_{\alpha/p'},\kappa)$. We let $\varrho^{1+s}=\sigma$ for some sufficiently small $s>0$ which will be determined later. Then the above display becomes
     \begin{align*}
         \begin{split}
             &\int_{B_{\sigma}(x_0)}|\na u-(\na u)_{B_{\sigma}(x_0)}|^p\,dx
             \\
             &\le c\left ( \sigma^{\frac{s(\gamma p+n) + \kappa}{1+s}} + \sigma^{\frac{\alpha_0+\kappa}{1+s}}   \right)   r^{-\kappa}\int_{B_{3r/2}(x_0)}(|\na u|^p+1)\,dx ,
         \end{split}
     \end{align*}
     We take $s:= \tfrac{\alpha_0}{\gamma p+n}$, so that the exponents of $\sigma$ are identical, and thus
     \[
     \int_{B_{\sigma}(x_0)}|\na u-(\na u)_{B_{\sigma}(x_0)}|^p\,dx\le c\sigma^{\frac{s(\gamma p+n)+\kappa}{1+s}}r^{-\kappa}\int_{B_{3r/2}(x_0)}(|\na u|^p+1)\,dx.
     \]
     Choose $\kappa \in (n-s\gamma p, n)$ and $\beta>0$ so that
     \[
     n<\frac{s(\gamma p+n)+\kappa}{1+s}=n+\beta p.
     \]
     Furthermore, since we observe
     \[
     \kappa<\frac{s(\gamma p+n)+\kappa}{1+s},
     \]
     we arrive at
     \[
     \fint_{B_{\sigma}(x_0)}|\na u-(\na u)_{B_{\sigma}(x_0)}|^p\,dx\le c \left(\frac{\sigma}{r}\right)^{\beta p}
     \fint_{B_{3r/2}(x_0)}(|\na u|^p+1)\,dx .
     \]
     Note that this estimate is valid for all $\sigma < r^{1+s}$. 
    We now consider the scaled map 
    \[
    w(y)=\frac{u(x_0+ry)}{r},
    \]
     which is defined on $y\in \bar B_{3r_0/r}(\bar 0) \supset \bar B_3(\bar 0)$. Then  $\na w(y)= \na u(x_0+ry)$, and it follows that $w$ solves an equation similar to that of $u$ with re-scaled coefficient and source term, having the same bound for the H\"older semi-norms as before since $r\leq 1$.  Therefore, by the previous argument, it follows that
     \[
     \fint_{B_{\tau}(\bar 0)}|\na w - (\na w)_{B_{\tau}(\bar 0)}|^p\,dy
     \le c \left(\frac{\tau}{r_0}\right)^{\beta p}
     \fint_{B_{3r_0/2}(\bar 0)}(|\na w|^p+1)\,dy,
     \]
     for all $\tau<r_0^{1+s}$ where $r_0$ is as in Lemma~\ref{decay_est}. Since $r_0$ is a constant, we have
     \begin{align}\label{est:re-scaled}
     \fint_{B_{\tau}(\bar 0)}|\na w - (\na w)_{B_{\tau}}|^p\,dy
     \le c \tau^{\beta p}
     \fint_{B_{3/2}(\bar 0)}(|\na w|^p+1)\,dy,
     \end{align}
     for $\tau<r_0^{1+s}$. On the other hand, if $r_0^{1+s}\le \tau <1$, we apply the triangle inequality to get
     \begin{align*}
     	\fint_{B_{\tau}(\bar 0)}|\na w - (\na w)_{B_{\tau}(\bar 0)}|^p\,dy
     	&\le 2^p\fint_{B_{\tau}(\bar 0)}|\na w|^p\,dy\\
     	&\le 2^p\left( \frac{3}{2r_0^{1+s}}\right)^n\fint_{B_{3/2}(\bar 0)}|\na w|^p\,dy
     	\\
     	&\leq c \tau^{\beta p} \fint_{B_{3/2}(\bar 0)}|\na w|^p\,dy,
     \end{align*}
    where again the factors of $r_0$ have been absorbed into the constant $c$. Combining the estimates for the different ranges of $\tau$ we conclude that \eqref{est:re-scaled} is valid for all $0<\tau<1$. Therefore, scaling back to the original map, we have for any $\sigma\in(0,r)$ the estimate
    \[
    \fint_{B_{\sigma}(x_0)}|\na u-(\na u)_{B_{\sigma}(x_0)}|^p\,dx\le c \left(\frac{\sigma}{r}\right)^{\beta p}
    \fint_{B_{3r/2}(x_0)}(|\na u|^p+1)\,dx .
    \]
     Since $B_{3r_0}(x_0) \subset \Omega$, a similar estimate can be obtained if we replace $x_0$ by any point $x_1 \in B_\sigma(x_0)$ and we can also replace $\sigma$ in the estimate by any $\tilde \sigma \leq \sigma$. The proof is therefore completed by the Campanato characterization and covering argument, see for example  \cite{campanato1963}.
\end{proof}

\section{Application to the nonlinear parametric problem}
In this section, we investigate how the solution of the equation \eqref{eq} depends on the coefficient function \eqref{coeff}. Specifically we consider the case of coefficient functions $a(z)$ parametrized by a single complex variable $z\in \C$ confined to some open subset $U\subset \C$. Thus, for all $z\in U$ we suppose that $a(z)$ belongs to the space of coefficient functions $\mathcal{E}(\Om)$ for some fixed constants $0<\nu<L$.
For fixed $F\in L^{p'}(\Om;\C^{N\times n})$, we can then by the existence and uniqueness results obtained in Section \ref{sec:existence}  define the map $\mathcal{F}:U\to W_0^{1,p}(\Om;\C^N)$, $\mathcal{F}(z)=u(z)$ where $u(z)$ is the unique weak solution in $ W_0^{1,p}(\Om;\C^N)$ to
\[
-\dv ( a(z)(\varepsilon^2+|\na u(z)|^2)^\frac{p-2}{2}\na u(z) )=-\dv F.
\]

The aim of this section is to study the regularity properties of $\mathcal{F}$.
For this purpose we will employ the Schauder estimate obtained in the previous section. 
As we need a global version of the estimate, we restrict ourselves to domains of the form
\begin{align}\label{cube}
    \Om=C_R=\{ x=(x_1,...,x_n)\in \mathbb{R}^n: |x_j|\le R\quad\text{for all}\quad 1\le j\le n \}.
\end{align}
for some $R>0$. We remark that the subsequent arguments and results are valid also for other rectangular domains.

\begin{theorem}\label{global_schauder}
        Suppose \eqref{cube} and \eqref{Holder_data} hold. Then the gradient of solutions to \eqref{eq} is H\"older-continuous in $C_R$. Moreover, there exist $r_0$, $c>0$ and $\beta\in(0,\alpha)$  depending on $n,N,p,\nu,L,\alpha,[a]_{\alpha/p'},[F]_{\alpha/p'}, R$ such that
    \[
    \sup_{x,y\in \cap C_R}|\na u(x)-\na u(y)|\le c |x-y|^\beta\left( \int_{C_R} (|\na u|^p+1)\,dx \right)^\frac{1}{p}.
    \]
\end{theorem}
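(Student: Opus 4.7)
The plan is to combine the interior Schauder estimate of Theorem~\ref{schauder} with a boundary analog obtained via reflection across the flat faces of $C_R$. Since the subsequent applications concern solutions in $W_0^{1,p}(C_R;\C^N)$, we may assume zero Dirichlet data on $\partial C_R$; this is what makes the rectangular geometry in~\eqref{cube} natural, as odd reflection across each face is then admissible. For $x_0\in C_R$ at distance at least some fixed $r_0$ from $\partial C_R$, Theorem~\ref{schauder} applies directly on $B_{r_0}(x_0)\subset C_R$, yielding the required Campanato-type decay on integral averages.

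For $x_0$ within distance $r_0$ of $\partial C_R$, we mimic the proof of Theorem~\ref{schauder} on the region $B^+_\varrho(x_0):=B_\varrho(x_0)\cap C_R$, which is a half-ball, quarter-ball, or $2^k$-th of a ball depending on whether $x_0$ sits near a face, edge, or $k$-fold corner. The comparison map $v=v(\varrho)\in u+W_0^{1,p}(B^+_\varrho;\C^N)$ is taken to be the unique weak solution to $-\dv((\varepsilon^2+|\na v|^2)^\frac{p-2}{2}\na v)=0$ in $B^+_\varrho(x_0)$, whose existence and uniqueness follow from Corollary~\ref{thm_ex_non_bd} and Theorem~\ref{uniqueness}. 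Since $u\equiv 0$ on the flat portions $\partial B^+_\varrho\cap\partial C_R$, so is $v$. We extend $v$ by successive odd reflections across each flat face of $\partial C_R$ meeting $B_\varrho(x_0)$, producing an extension $\tilde v$ on the full ball $B_\varrho(x_0)$. Each reflection preserves the weak formulation: the scalar weight $(\varepsilon^2+|\na v|^2)^\frac{p-2}{2}$ is invariant under reflection of $\na v$, and the normal component of the flux is continuous across each interface thanks to the vanishing boundary data. Theorem~\ref{limit_grad_holder} applied to $\tilde v$ on the full ball then supplies the H\"older continuity of $\na\tilde v$, which restricts to the needed decay bound for $\na v$ on $B^+_\varrho(x_0)$. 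Crucially, only $v$ (and not $u$, $a$, or $F$) is reflected, so the delicate issue of reflecting tangential components of $F$, which would otherwise create jumps in $\tilde F$, never arises.

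The analogs of Lemma~\ref{limit_en}, Lemma~\ref{comparison}, and Lemma~\ref{decay_est} transfer verbatim to half-balls by using $u-v\in W_0^{1,p}(B^+_\varrho;\C^N)$ as a test function in both equations for $u$ and $v$. The Campanato-type argument at the end of the proof of Theorem~\ref{schauder}, including the rescaling step, then produces the boundary decay estimate
\[
\fint_{B^+_\sigma(x_0)}|\na u-(\na u)_{B^+_\sigma(x_0)}|^p\,dx\le c\Bigl(\tfrac{\sigma}{r}\Bigr)^{\beta p}\fint_{B^+_{3r/2}(x_0)}(|\na u|^p+1)\,dx
\]
for all $0<\sigma<r<r_0$, with $r_0$ chosen small enough relative to $R$ so that all the relevant reflections fit inside $C_R$. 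Merging this with the interior estimate produces a uniform Campanato-type bound for $\na u$ on $C_R$, and the Campanato isomorphism for Lipschitz domains~\cite{campanato1963} converts this into the global H\"older estimate stated in the theorem; the global $L^p$-norm of $\na u$ over $C_R$ dominates the averaged right-hand sides.

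The main obstacle will be the careful bookkeeping of successive reflections near edges and corners, where one must check that the iterated odd reflection of $v$ is indeed a weak solution of the homogeneous equation on a full neighborhood of $x_0$ and that the associated half-ball energy and comparison estimates remain valid. Once this is in place, the boundary analogs of the interior lemmas and the Campanato argument follow by routine modifications of their interior counterparts.
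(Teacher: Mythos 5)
Your proposal follows the same route as the paper, which only offers a one-sentence sketch pointing to the reflection argument applied to the limiting PDE and then cites \cite{Ya1989,MR1230384} for the details: you odd-reflect the comparison solution $v$ of the homogeneous constant-coefficient model problem across the flat faces of $C_R$ (admissible since $v=0$ there) and then run the half-ball analogues of Lemmas~\ref{limit_en}, \ref{comparison}, \ref{decay_est} and the Campanato iteration. The only imprecision is the claim that reflection produces $\tilde v$ on the full ball $B_\varrho(x_0)$ when $x_0$ is strictly interior; the reflected region is a lens, not a ball, but it contains a ball about $x_0$ of comparable radius (equivalently, one can center the half-balls on $\partial C_R$), which is the standard bookkeeping and does not affect the argument.
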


The proof is analogous to the proof of Theorem~\ref{schauder} as we are considering $C_R$, the reflection argument can be employed to the limiting PDE so that the regularity properties and corresponding estimates hold up to the referenced boundary. We refer to \cite{Ya1989,MR1230384} and omit the details.

We now state the main theorem of this section. We remark that by Theorem \ref{global_schauder}, we have $\mathcal{F}:U\mapsto W_0^{1,\infty}(C_R,\C^N)\subset W_0^{1,2}(C_R,\C^N)$.

\begin{theorem}\label{diff_thm}
    Let $\varepsilon\in(0,1)$, $\alpha\in(0,1)$ and let $U$ be an open subset in $\C$. Let $\Omega$ be a rectangular domain in $\R^n$. Let $F\in C^{\alpha/p'}(\Omega;\C^{N\times n})$. 
    Suppose $a$ is a map from $U$ to $C^{\alpha/p'}(\Omega;\C)$ with $a(z)\in\mathcal{E}(\Omega)$ for all $z\in U$ with fixed constants $0<\nu<L$ and 
    \[
    \sup_{z\in U}[a(z)]_{C^{\alpha/p'}(\Omega;\C)}<\infty.
    \]
    Denote by $u(z)$ the unique weak solution in $ W_0^{1,p}(\Om;\C^N)$ to problem \eqref{eq} with the coefficient function $a(z)$, $z \in \C$. Define the map $\mathcal{F}:U\to W_0^{1,\infty}(\Omega;\C^N)$,
    
    \noindent $\mathcal{F}(z) = u(z)$.
    Furthermore, suppose $a'(z)\in L^\infty(\Omega;\C)$, that is,
    \[
    \lim_{h\to 0}|h|^{-1}\| a(z+h)-a(z)-h a'(z) \|_{L^\infty(\Omega;\C)}=0,
    \]
    Suppose also that the complex coefficient satisfies the condition
    \begin{align}\label{a_extra-cond}
     s a^R(z) > \frac{|p-2|}{p}|a(z)|,
    \end{align}
    for some $s \in (0,1]$. Then for every $\theta \in \mathbb{S}^1$, there is $w_\theta \in W^{1,2}_0(\Omega;\C^N)$ such that
\begin{align}\label{weak_convg_on_line_seggg}
\frac{u(z+t\theta) - u(z)}{t\theta} \xrightarrow[t\to 0]{} w_\theta \quad \text{weakly in } W^{1,2}_0(\Omega; \C^N).
\end{align}If furthermore \eqref{a_extra-cond} holds for some $s\in (0,1)$ then $w_\theta$ is in fact the derivative of $\mathcal{F}$ as a map into $W^{1,2}_0$ along the line segment $z + t\theta, t \in \R$. That is,
\begin{align}\label{diffability_along_line_seg}
 \lim_{t\to 0} \frac{\norm{u(z + t\theta) - u(z) - t\theta w_\theta}_{W^{1,2}_0(\Omega;\C^N)}}{t} = 0.
\end{align}

\end{theorem}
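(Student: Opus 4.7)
The plan is to execute the four-step program sketched in the introduction: uniformly bound the difference quotients in $W^{1,2}_0$, extract weak subsequential limits along a line segment, identify the limit equation they satisfy, and use a coercivity argument that (for $s=1$) pins down the limit and (for $s<1$) upgrades the convergence to strong. Theorem~\ref{global_schauder} is applied uniformly in $z$ --- valid because $\sup_{z\in U}[a(z)]_{C^{\alpha/p'}}<\infty$ --- to provide a uniform $L^{\infty}$ bound on $\na u(z)$ that will be used throughout.

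For the first step, set $u_h=u(z+h)$ and $u=u(z)$, and test the difference of the two weak formulations with $u_h-u$. Taking real parts and applying Lemmas~\ref{str}, \ref{str1}, \ref{str2} together with \eqref{ell2} gives
\[
\nu\int_{\Om}(\varepsilon^{2}+|\na u|^{2}+|\na u_h|^{2})^{\frac{p-2}{2}}|\na(u_h-u)|^{2}\,dx\le C\|a(z+h)-a(z)\|_{\infty}\int_{\Om}(\varepsilon^{2}+|\na u_h|^{2})^{\frac{p-1}{2}}|\na(u_h-u)|\,dx.
\]
Because $\varepsilon>0$ and $\na u(z),\na u(z+h)$ are uniformly $L^{\infty}$-bounded, the weight on the left is pinched between two positive constants; together with $\|a(z+h)-a(z)\|_{\infty}=O(|h|)$ from the hypothesis, this yields $\|\na(u_h-u)\|_{L^{2}}\le C|h|$, so the difference quotients $w_h:=(u_h-u)/h$ are bounded in $W_0^{1,2}(\Om;\C^{N})$.

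Next I fix $\theta\in\mathbb{S}^{1}$ and restrict $h=t\theta$ with $t\to 0$. Weak compactness yields a subsequence $w_{t_j\theta}\rightharpoonup w_{\theta}$. To identify $w_{\theta}$, I rewrite the difference of the weak formulations as
\[
\int_{\Om}\Big[\tfrac{a(z+h)-a(z)}{h}\mathcal{A}(\na u_h)+a(z)\tfrac{\mathcal{A}(\na u_h)-\mathcal{A}(\na u)}{h}\Big]\cdot\overline{\na\phi}\,dx=0,\qquad \mathcal{A}(\xi):=(\varepsilon^{2}+|\xi|^{2})^{\frac{p-2}{2}}\xi,
\]
and use the mean-value identity $\mathcal{A}(\na u_h)-\mathcal{A}(\na u)=\int_{0}^{1}D\mathcal{A}(\na u_s)(\na u_h-\na u)\,ds$. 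The crucial point is that $D\mathcal{A}$ is only $\R$-linear, so dividing by the complex $h=t\theta$ and using $\na u_h-\na u=t\theta\,\na w_h$ produces a factor $\theta^{-1}$ in front of $D\mathcal{A}(\na u_s)(\theta\na w_h)$. Using Step~1 combined with the uniform H\"older bound from Theorem~\ref{global_schauder} to get $\na u_h\to\na u$ strongly, I pass to the limit and obtain the identity
\[
\int_{\Om}a'(z)\mathcal{A}(\na u)\cdot\overline{\na\phi}+a(z)\Big[(\varepsilon^{2}+|\na u|^{2})^{\frac{p-2}{2}}\na w_{\theta}+(p-2)(\varepsilon^{2}+|\na u|^{2})^{\frac{p-4}{2}}\tfrac{\re(\theta\langle\na w_{\theta},\na u\rangle)}{\theta}\na u\Big]\cdot\overline{\na\phi}\,dx=0.
\]
Testing the corresponding homogeneous identity with $\phi=w$, using the algebraic identity $\re(\theta\beta)/\theta=(\beta+\bar\theta^{2}\bar\beta)/2$ with $\beta=\langle\na w,\na u\rangle$, and splitting into $p\ge 2$ and $p<2$, a direct computation lower-bounds the real part of the integrand by $\tfrac12(p\,a^R-|p-2||a|)(\varepsilon^{2}+|\na u|^{2})^{(p-2)/2}|\na w|^{2}$. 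Under \eqref{a_extra-cond} with $s\in(0,1]$ this factor is pointwise strictly positive, forcing $\na w=0$ and giving the uniqueness needed to conclude \eqref{weak_convg_on_line_seggg}. Under $s<1$ one has in addition the uniform lower bound $p a^R-|p-2||a|\ge p\nu(1-s)>0$ (using $|a|\ge a^R\ge\nu$); subtracting the pre-limit identity for $w_h$ from the limit identity for $w_\theta$, testing with $w_h-w_\theta$, and controlling the error terms (which vanish by the strong convergence $\na u_h\to\na u$ and the complex differentiability of $a$) gives $\|w_h-w_\theta\|_{W_0^{1,2}}\to 0$, equivalent to \eqref{diffability_along_line_seg}.

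The main obstacle is the coercivity computation for the limit equation: the interaction of the complex coefficient $a(z)=a^R+ia^I$ with the direction-dependent quantity $\re(\theta\beta)/\theta$, together with the case split for the sign of $p-2$, has to conspire to yield precisely the sharp combination $p\,a^R-|p-2||a|$ that is governed by hypothesis \eqref{a_extra-cond}. Conceptually related is the derivation of the limit equation itself, where the fact that $D\mathcal{A}$ is $\R$-linear but not $\C$-linear is exactly the reason that the $\theta$-dependent term appears; this is also the intrinsic obstruction to full complex differentiability of $\mathcal{F}$ when $p\ne 2$.
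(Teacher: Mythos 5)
Your proposal follows the paper's proof essentially step by step: uniform $L^\infty$ bound on $\nabla u(z)$ from the global Schauder estimate, $\|\nabla(u(z+h)-u(z))\|_{L^2}\le C|h|$ via the structure lemmas and the uniform boundedness of $(\varepsilon^2+|\nabla u(z)|^2+|\nabla u(z+h)|^2)^{(p-2)/2}$, weak compactness along $h=t\theta$, identification of the limit equation by the mean-value expansion of the nonlinearity, and the coercivity argument. The observation that $D\mathcal{A}$ is $\R$-linear but not $\C$-linear, producing the $\theta$-dependent factor $\tilde\theta=\bar\theta/\theta$, is exactly the mechanism in the paper (there it appears as the $\bar h/h$ factor in the sesquilinear splitting of $\tfrac{d}{ds}|U(s,z,h)|^2$); your identity $\re(\theta\beta)/\theta=\tfrac12(\beta+\tilde\theta\bar\beta)$ reproduces precisely the two terms in the paper's limit equation \eqref{eq:w_theta}. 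The coercivity bound $\tfrac12\big(p\,a^R-|p-2|\,|a|\big)$ and the estimate $p\,a^R-|p-2|\,|a|>p(1-s)a^R\ge p(1-s)\nu$ for $s<1$ also match the paper. The only presentational differences are that you package the mean-value computation via $D\mathcal{A}$ rather than writing the integral out, and you get $L^\infty$ convergence of the gradients from the $L^2$ bound plus uniform H\"older estimates and Arzel\`a--Ascoli more directly, whereas the paper first identifies the subsequential uniform limit via the PDE and uniqueness; both routes reach the same conclusion. In short, correct, and essentially the same approach as the paper.
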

\begin{remark}
 Given the nonnegativity of $a^R$, the extra condition \eqref{a_extra-cond} can be re-written as
 \begin{align*}
  |a^I(z)| < a^R(z)\Big(\big(\tfrac{s p}{p-2}\big)^2 - 1 \Big)^\frac12,
 \end{align*}
which essentially says that the imaginary part should not be too large compared to the real part.
\end{remark}

\begin{proof}[Proof of Theorem \ref{diff_thm}]
We denote $\mathcal{F}(z)=u(z)$ and $u(z)(x)=u(z,x)$ for $z\in U$ and $x\in \Omega$.
    By Theorem~\ref{global_schauder}, we have that
    \begin{align*}
        \begin{split}
            \sup_{x\in \Omega}|\na u(z,x)|
            &\le |(\na u(z))_{\Omega}|+c\left(\int_{\Omega} (|\na u(z,x)|^p+1)\,dx\right)^\frac{1}{p}\\
            &\le c\left(\int_{\Omega} (|\na u(z,x)|^p+1)\,dx\right)^\frac{1}{p}
        \end{split}
    \end{align*}
   for some constant $c>0$, not depending on $z$. Moreover, since the source term $F$ is fixed, it follows that there exists a uniform bound $c>0$ such that
   \begin{align}\label{uniform_sup}
       \sup_{z\in U}\sup_{x\in \Omega}|\na u(z,x)|\le c.
   \end{align}
    We consider
    \begin{align*}
        \begin{split}
            & -\dv a(z+h)(\varepsilon^2+|\na u(z+h)|^2)^\frac{p-2}{2}\na u(z+h)  = -\dv F, \\
            & -\dv a(z)(\varepsilon^2+|\na u(z)|^2)^\frac{p-2}{2}\na u(z)  = -\dv F.
        \end{split}
    \end{align*}
    By taking $u(z) - u(z+h)$ as a test function to
    \begin{align*}
        \begin{split}
           & -\dv \Big( a(z) \big[ (\varepsilon^2+|\na u(z)|^2)^\frac{p-2}{2}\na u(z)- (\varepsilon^2+|\na u(z+h)|^2)^\frac{p-2}{2}\na u(z+h) \big]\Big)\\
           &= -\dv\Big( (a(z+h)-a(z))(\varepsilon^2+|\na u(z+h)|^2)^\frac{p-2}{2}\na u(z+h)\Big),
        \end{split}
    \end{align*}
    we have from Lemma~\ref{str3} and \eqref{uniform_sup} that
    \begin{align*}
       \int_{\Omega}|\na u(z+h)-\na u(z)|^2\,dx\le c\int_{\Omega} |a(z+h)-a(z)||\na u(z)-\na u(z+h)|\,dx,
    \end{align*}
    where we used the fact that $(\varepsilon^2+|\na u(z)|^2+|\na u(z+h)|^2)^\frac{p-2}{2}$ is bounded below and above uniformly in $x$ and $z$ since \eqref{uniform_sup} and $\varepsilon\ne 0$ hold. Therefore, the Young inequality leads to
    \begin{align}\label{diff}
        \int_{\Omega}|\na u(z+h)-\na u(z)|^2\,dx\le c|h|^2,
    \end{align}
    for sufficiently small $|h|>0$.
    By \eqref{uniform_sup}, any sequence $(h_j)$ in $\C$ converging to zero has a subsequence $(h_{j_k})$ converging weakly in $W_0^{1,p}(\Omega;\C^N)$ to some $\eta\in W_0^{1,p}(\Omega;\C^N)$.
    On the other hand, as the functions $\na u(z+h_{j_k})$ are uniformly bounded and $\beta$-H\"older continuous, we conclude by the Arzelà–Ascoli that yet another subsequence still labelled $(h_{j_k})$ converges uniformly to $\na \eta$ and that $\na \eta\in C^\beta(\Omega; \C^{N\times n})$. Thus,
    \[
    \|\na u(z+h_{j_k})-\na \eta\|_{L^\infty(\Omega; \C^{N\times n})}\to 0\quad \text{as}\quad k\to \infty.
    \]
   Moreover, this gives
   \[
   -\dv (a(z)(\varepsilon^2+|\na \eta|^2)^\frac{p-2}{2}\na \eta)=-\dv F.
   \]
    By the uniqueness, we have $\eta\equiv u$. Since any sequence $h_j$ converging to zero has a subsequence such that the gradients converge uniformly to $\na u(z)$, we conclude that
    \begin{align}\label{diff_infty}
        \|\na u(z+h) - \na u(z)\|_{L^\infty(\Omega; \C^{N\times n})}\to 0\quad \text{as}\quad h\to 0.
    \end{align}
We now show that \eqref{weak_convg_on_line_seggg} holds provided that the coefficient functions satisfy \eqref{a_extra-cond}. The first step is to notice that by \eqref{diff} and the Poincar\'e inequality, the difference quotients
\begin{align*}
 \frac{u(z+h) - u(z)}{h}, \quad h \in U
\end{align*}
form a bounded set in $W^{1,2}_0(\Omega; \C^N)$. We combine the weak formulations satisfied by $u(z)$ and $u(z+h)$, and add and remove a term involving the derivative $a'(z)$ to conclude
\begin{align}\label{eq_z_and_z+h_combined}
 \notag 0 &= \int_\Omega [a(z+h) - a(z) - h a'(z)]\big(\varepsilon^2 + |\nabla u(z+h)|^2\big)^{\frac{p-2}{2}} \langle \nabla u(z+h),\nabla \varphi\rangle dx
 \\
  &\quad +\int_\Omega  h a'(z)\big(\varepsilon^2 + |\nabla u(z+h)|^2\big)^{\frac{p-2}{2}}\langle \nabla u(z+h), \nabla \varphi\rangle dx
 \\
 \notag & + \int_\Omega a(z)\big\langle \big(\varepsilon^2 + |\nabla u(z+h)|^2\big)^{\frac{p-2}{2}} \nabla u(z+h) - \big(\varepsilon^2 + |\nabla u(z)|^2\big)^{\frac{p-2}{2}}\nabla u(z), \nabla \varphi\big\rangle d x.
\end{align}
The last integral can further be split as follows:
\begin{align}\label{expansion1}
 \notag &\int_\Omega a(z)\big\langle \big(\varepsilon^2 + |\nabla u(z+h)|^2\big)^{\frac{p-2}{2}} \nabla u(z+h) - \big(\varepsilon^2 + |\nabla u(z)|^2\big)^{\frac{p-2}{2}}\nabla u(z), \nabla \varphi\big\rangle d x 
 \\
 \notag &= \int_\Omega a(z)\big\langle \Big[\big(\varepsilon^2 + |\nabla u(z+h)|^2\big)^{\frac{p-2}{2}} - \big(\varepsilon^2 + |\nabla u(z)|^2\big)^{\frac{p-2}{2}}\Big] \nabla u(z + h), \nabla \varphi\big\rangle d x
 \\
 &\quad + \int_\Omega a(z)\big(\varepsilon^2 + |\nabla u(z)|^2\big)^{\frac{p-2}{2}} \langle \nabla u(z+h) - \nabla u(z), \nabla \varphi \rangle d x.
\end{align}
Using the shorthand notation
\begin{align*}
 U(s,z,h) = s \nabla u (z+h) + (1-s)\nabla u(z),
\end{align*}
the expression inside the square brackets can furthermore be written as
\begin{align}\label{expansion2}
 &\big(\varepsilon^2 + |\nabla u(z+h)|^2\big)^{\frac{p-2}{2}} - \big(\varepsilon^2 + |\nabla u(z)|^2\big)^{\frac{p-2}{2}}
 \\
 \notag &= \int^1_0 \frac{d}{ds} \big(\varepsilon^2 + |U(s,z,h)|^2\big)^{\frac{p-2}{2}}d s
 \\
\notag &=\big(\tfrac{p-2}{2}\big)\int^1_0 \big(\varepsilon^2 + |U(s,z,h)|^2\big)^{\frac{p-4}{2}} \langle U(s,z,h), \nabla u(z+h) - \nabla u(z)\rangle
 \\
\notag &+ \big(\tfrac{p-2}{2}\big)\int^1_0 \big(\varepsilon^2 + |U(s,z,h)|^2\big)^{\frac{p-4}{2}}\langle \nabla u(z+h) - \nabla u(z), U(s,z,h)\rangle d s.
\end{align}
Combining \eqref{eq_z_and_z+h_combined}, \eqref{expansion1} and \eqref{expansion2}, dividing by $h$ and utilizing the sesquilinearity of the inner product we end up with
\begin{align}\label{long_expr}
\notag 0 & = \int_\Omega \Big[  \frac{a(z+h) - a(z) - h a'(z)}{h}\Big]\big(\varepsilon^2 + |\nabla u(z+h)|^2\big)^{\frac{p-2}{2}} \langle \nabla u(z+h),\nabla \varphi\rangle dx
 \\
 & \quad + \int_\Omega  a'(z)\big(\varepsilon^2 + |\nabla u(z+h)|^2\big)^{\frac{p-2}{2}}\langle \nabla u(z+h), \nabla \varphi\rangle dx
\\
\notag & \quad + \int_\Omega a(z)\big(\varepsilon^2 + |\nabla u(z)|^2\big)^{\frac{p-2}{2}} \Big\langle  \frac{\nabla u(z+h) - \nabla u(z)}{h}, \nabla \varphi \Big\rangle d x
\\
\notag & \quad + \big(\tfrac{p-2}{2}\big) \int_\Omega  \Big[\int^1_0 \big(\varepsilon^2 + |U(s,z,h)|^2\big)^{\frac{p-4}{2}}\Big \langle \frac{\nabla u(z+h) - \nabla u(z)}{h}, U(s,z,h)\Big\rangle d s\Big]
\\
\notag &\hspace{82mm} \times a(z) \langle \nabla u(z + h), \nabla \varphi \rangle d x
\\
\notag & \quad + \frac{\bar h}{h}\big(\tfrac{p-2}{2}\big) \int_\Omega \Big[ \int^1_0 \big(\varepsilon^2 + |U(s,z,h)|^2\big)^{\frac{p-4}{2}}\Big \langle U(s,z,h), \frac{\nabla u(z+h) - \nabla u(z)}{h}\Big\rangle d s \Big]
\\
\notag & \hspace{82mm} \times a(z)\langle \nabla u(z + h), \nabla \varphi \rangle d x.
\end{align}
Due to \eqref{diff_infty} we have that 
\begin{align*}
 \norm{U(s,z,h) - \na u(z) }_{L^\infty(\Omega;\C^{N\times n})} \leq \norm{\nabla u(z+h) - \nabla u(z)}_{L^\infty(\Omega;\C^{N\times n})} \to 0.
\end{align*}
Since furthermore, the difference quotients appearing in the last two terms of \eqref{long_expr} remain bounded in $L^2(\Omega;\C^{N\times n})$, we see by adding and removing terms involving $\nabla u(z)$ that we can write
\begin{align}\label{long_expr2}
\notag &0  = \int_\Omega \Big[  \frac{a(z+h) - a(z) - h a'(z)}{h}\Big]\big(\varepsilon^2 + |\nabla u(z+h)|^2\big)^{\frac{p-2}{2}} \langle \nabla u(z+h),\nabla \varphi\rangle dx
 \\
 & \quad + \int_\Omega  a'(z)\big(\varepsilon^2 + |\nabla u(z+h)|^2\big)^{\frac{p-2}{2}}\langle \nabla u(z+h), \nabla \varphi\rangle dx
\\
\notag & \quad + \int_\Omega a(z)\big(\varepsilon^2 + |\nabla u(z)|^2\big)^{\frac{p-2}{2}} \Big\langle  \frac{\nabla u(z+h) - \nabla u(z)}{h}, \nabla \varphi \Big\rangle d x
\\
\notag & \quad + \big(\tfrac{p-2}{2}\big) \int_\Omega a(z)  \big(\varepsilon^2 + |\nabla u(z)|^2\big)^{\frac{p-4}{2}}\Big \langle \frac{\nabla u(z+h) - \nabla u(z)}{h}, \nabla u(z) \Big\rangle  \langle \nabla u(z), \nabla \varphi\rangle d x
\\
\notag &  + \frac{\bar h}{h}\big(\tfrac{p-2}{2}\big) \int_\Omega a(z)  \big(\varepsilon^2 + |\nabla u(z)|^2\big)^{\frac{p-4}{2}}\Big \langle \nabla u(z), \frac{\nabla u(z+h) - \nabla u(z)}{h}\Big\rangle  \langle \nabla u(z), \nabla \varphi \rangle d x
\\
\notag & + R(h),
\end{align}
where $R(h)$ collects remainder terms which converge to zero as $h\to 0$. Fix $\theta \in \mathbb{S}^1$ and let $(t_j)$ be a sequence converging to zero. Then by weak compactness, there is an element $w_\theta \in W^{1,2}_0(\Omega;\C^N)$ and a subsequence, still labelled $(t_j)$, such that
\begin{align*}
 \frac{u(z+t_j\theta) - u(z)}{t_j\theta} \xrightarrow[j\to \infty]{} w_\theta, \quad \text{weakly in } W^{1,2}_0(\Omega;\C^N).
\end{align*}
In order to conclude \eqref{weak_convg_on_line_seggg} we show that $w_\theta$ is independent of the sequence $(t_j)$. To do so, denote $\tilde \theta := \bar \theta/\theta$ and substitute $h=t_j\theta$ in \eqref{long_expr2}. Due to the weak limit of the difference quotient we have by taking $j\to \infty$ that 
\begin{align}\label{eq:w_theta}
& \int_\Omega a(z)\big(\varepsilon^2 + |\nabla u(z)|^2\big)^{\frac{p-2}{2}} \langle \nabla w_\theta, \nabla \varphi \rangle d x
 \\
\notag &\quad + \big(\tfrac{p-2}{2}\big) \int_\Omega a(z)  \big(\varepsilon^2 + |\nabla u(z)|^2\big)^{\frac{p-4}{2}}\langle \nabla w_\theta, \nabla u(z) \rangle  \langle \nabla u(z), \nabla \varphi\rangle d x
\\
\notag &\quad + \tilde \theta \big(\tfrac{p-2}{2}\big) \int_\Omega a(z)  \big(\varepsilon^2 + |\nabla u(z)|^2\big)^{\frac{p-4}{2}} \langle \nabla u(z), \nabla w_\theta \rangle  \langle \nabla u(z), \nabla \varphi \rangle d x
 \\
\notag &= -\int_\Omega  a'(z)\big(\varepsilon^2 + |\nabla u(z)|^2\big)^{\frac{p-2}{2}}\langle \nabla u(z), \nabla \varphi\rangle dx,
\end{align}
for all test functions. To see that this condition uniquely determines $w_\theta$, suppose that there are two solutions $w^1$ and $w^2$ satisfying this condition. Then the difference $v:= w^1 - w^2$ satisfies
\begin{align*}
 & \int_\Omega a(z)\big(\varepsilon^2 + |\nabla u(z)|^2\big)^{\frac{p-2}{2}} \langle \nabla v, \nabla \varphi \rangle d x
 \\
 &\quad + \big(\tfrac{p-2}{2}\big) \int_\Omega a(z)  \big(\varepsilon^2 + |\nabla u(z)|^2\big)^{\frac{p-4}{2}}\langle \nabla v, \nabla u(z) \rangle  \langle \nabla u(z), \nabla \varphi\rangle d x
\\
 &\quad + \tilde \theta \big(\tfrac{p-2}{2}\big) \int_\Omega a(z)  \big(\varepsilon^2 + |\nabla u(z)|^2\big)^{\frac{p-4}{2}} \langle \nabla u(z), \nabla v \rangle  \langle \nabla u(z), \nabla \varphi \rangle d x = 0.
\end{align*}
Choose $\varphi = v$ and take the real part to obtain
\begin{align*}
 & \int_\Omega a^R(z)\big(\varepsilon^2 + |\nabla u(z)|^2\big)^{\frac{p-2}{2}} |\nabla v|^2 d x
 \\
 &\quad + \big(\tfrac{p-2}{2}\big) \int_\Omega a^R(z)  \big(\varepsilon^2 + |\nabla u(z)|^2\big)^{\frac{p-4}{2}}|\langle \nabla v, \nabla u(z) \rangle|^2 d x
\\
 &\quad +  \big(\tfrac{p-2}{2}\big) \int_\Omega \big(\varepsilon^2 + |\nabla u(z)|^2\big)^{\frac{p-4}{2}} \re\big(\tilde \theta a(z) \langle \nabla u(z), \nabla v \rangle^2 \big) dx = 0.
\end{align*}
Estimating the real part by the modulus we have
\begin{align*}
 \int_\Omega \big(\varepsilon^2 + |\nabla u(z)|^2\big)^{\frac{p-2}{2}} \Big[a^R(z)|\nabla v|^2 + \big(\tfrac{(p-2)}{2} a^R(z) - \tfrac{|p-2|}{2}|a(z)|\big) \frac{|\langle \nabla u(z),\nabla v\rangle|^2}{\varepsilon^2 + |\nabla u(z)|^2}\Big] d x 
 \\
 \leq 0.
\end{align*}
By the Cauchy-Schwarz inequality we have
\begin{align*}
 \frac{|\langle \nabla u(z),\nabla v\rangle|^2}{\varepsilon^2 + |\nabla u(z)|^2} \leq \frac{|\nabla u(z)|^2 |\nabla v|^2}{\varepsilon^2 + |\nabla u(z)|^2} \leq |\nabla v|^2,
\end{align*}
and noting that the factor in front of the corresponding expression is nonpositive, we may use this estimate ending up with
\begin{align*}
 \int_\Omega \big(\varepsilon^2 + |\nabla u(z)|^2\big)^{\frac{p-2}{2}} \big(p\,a^R(z) - |p-2||a(z)|\big)|\nabla v|^2 d x \leq 0.
\end{align*}
which, combined with \eqref{a_extra-cond} implies that $v\equiv 0$. This confirms the uniqueness of $w_\theta$ and hence that \eqref{weak_convg_on_line_seggg} holds. Now suppose that \eqref{a_extra-cond} holds with $s\in (0,1)$. It remains to prove \eqref{diffability_along_line_seg}. 
Taking $h=t\theta$ for a fixed $\theta \in \mathbb{S}^1$, multiplying \eqref{long_expr2} and \eqref{eq:w_theta} by $h$ and adding the resulting equations we end up with
\begin{align*}
 & \int_\Omega a(z)\big(\varepsilon^2 + |\nabla u(z)|^2\big)^{\frac{p-2}{2}} \langle \nabla u(z+h) - \nabla u(z) - h \nabla w_\theta, \nabla \varphi \rangle d x
 \\
 &+ c_p \hspace{-1mm}\int_\Omega a(z)  \big(\varepsilon^2 + |\nabla u(z)|^2\big)^{\frac{p-4}{2}}\langle \nabla u(z+h) - \nabla u(z) - h \nabla w_\theta, \nabla u(z) \rangle  \langle \nabla u(z), \nabla \varphi\rangle d x
 \\
 &+ c_p \hspace{-1mm}\int_\Omega a(z)  \big(\varepsilon^2 + |\nabla u(z)|^2\big)^{\frac{p-4}{2}}\langle \nabla u(z), \nabla u(z+h) - \nabla u(z) - h \nabla w_\theta \rangle  \langle \nabla u(z), \nabla \varphi\rangle d x
 \\
 &+ h \hspace{-1mm}\int_\Omega a'(z) \big\langle \big(\varepsilon^2 + |\nabla u(z+h)|^2\big)^{\frac{p-2}{2}} \nabla u(z+h) - \big(\varepsilon^2 + |\nabla u(z)|^2\big)^{\frac{p-2}{2}}\nabla u(z), \nabla \varphi\big\rangle d x
 \\
 &+ \int_\Omega [a(z+h) - a(z) - h a'(z)]\big(\varepsilon^2 + |\nabla u(z+h)|^2\big)^{\frac{p-2}{2}} \langle \nabla u(z+h),\nabla \varphi\rangle dx
 \\
 &+ h R(h) = 0,
\end{align*}
where we denote $c_p := \tfrac{p-2}{2}$. By examining the remainder term $R(h)$ we see that last three terms can be combined into a remainder of the form
\begin{align*}
 h \tilde R(h), \quad |\tilde R(h)| \leq \xi(h) \norm{\nabla \varphi}_{L^2(\Omega;\C^{N\times n})}, \quad \lim_{h\to 0} \xi(h) = 0.
\end{align*}
Choosing the test function $\varphi = u(z+h) - u(z) - h w_\theta$ we have
\begin{align*}
 &\int_\Omega a(z)\big(\varepsilon^2 + |\nabla u(z)|^2\big)^{\frac{p-2}{2}}|\nabla \varphi |^2 d x
 \\
 & \quad + \big(\tfrac{p-2}{2}\big) \int_\Omega a(z)  \big(\varepsilon^2 + |\nabla u(z)|^2\big)^{\frac{p-4}{2}} |\langle \nabla u(z), \nabla
 \varphi\rangle|^2 d x
 \\
 & \quad + \big(\tfrac{p-2}{2}\big) \int_\Omega a(z)  \big(\varepsilon^2 + |\nabla u(z)|^2\big)^{\frac{p-4}{2}} (\langle \nabla u(z), \nabla
 \varphi\rangle)^2 d x = h\tilde R(h).
\end{align*}
Taking the real part and estimating as previously we obtain 
\begin{align*}
 \int_\Omega \big(\varepsilon^2 + |\nabla u(z)|^2\big)^{\frac{p-2}{2}} \big(p\,a^R(z) - |p-2||a(z)|\big)|\nabla \varphi|^2d x \leq |h| \xi(h) \norm{\nabla \varphi}_{L^2(\Omega;\C^{N\times n})},
\end{align*}
for some function $\xi(h)$ converging to $0$ as $h\to 0$. Using \eqref{a_extra-cond}, $\varepsilon > 0$ and the boundedness of $\nabla u(z)$, we have
\begin{align*}
 c \norm{\nabla \varphi}_{L^2(\Omega)}^2 \leq \int_\Omega \big(\varepsilon^2 + |\nabla u(z)|^2\big)^{\frac{p-2}{2}} p(1-s)a^R(z)|\nabla \varphi|^2d x 
 \leq |h| \xi(h) \norm{\nabla \varphi}_{L^2(\Omega)}.
\end{align*}
Recalling the definition of $\varphi$ and the fact that $h=t\theta$, we have verified \eqref{diffability_along_line_seg}.
\end{proof}

\bibliography{complex}
\bibliographystyle{plain}

\end{document}